\def \A {\mathcal{A}}
\def \E {\mathbb{E}}
\def \R {\mathbb{R}}
\def \N {\mathbb{N}}
\def \T {\mathbb{T}}
\def \eps {\varepsilon}
\def \sbs {\subseteq}
\newcommand{\cco}[1] {\overline{\text{co}}\left(#1\right)}
\newcommand{\co}[1] {\text{co}\left(#1\right)}
\newcommand{\Ext}[1] {\text{Ext}\left(#1\right)}
\def \closure {\overline}
\newtheorem{theorem}{Theorem}[section]
\newtheorem{lemma}[theorem]{Lemma}
\newtheorem{corollary}[theorem]{Corollary}
\newtheorem{proposition}[theorem]{Proposition}
\theoremstyle{definition}
\newtheorem{definition}[theorem]{Definition}
\theoremstyle{remark}
\newtheorem{remark}[theorem]{Remark}
\numberwithin{equation}{section}
\begin{document}

% \title[short text for running head]{full title}
\title{Nondentable Sets in Banach Spaces}

%    Only \author and \address are required; other information is
%    optional.  Remove any unused author tags.

%    author one information
% \author[short version for running head]{name for top of paper}
\author[Dilworth]{S. J. Dilworth}
\address{Department of Mathematics, University of South Carolina, Columbia, SC 29208, USA}
\curraddr{}
\email{dilworth@math.sc.edu}
\thanks{}

%    author two information
\author[Gartland]{Chris Gartland}
\address{Department of Mathematics, University of Illinois at Urbana-Champaign, Urbana, IL 61801, USA }
\curraddr{}
\email{cgartla2@illinois.edu}
\thanks{}

% author three
\author[Kutzarova]{Denka Kutzarova}
\address{Department of Mathematics University of Illinois at Urbana-Champaign Urbana, IL 61801, USA and Institute of Mathematics and Informatics, Bulgarian Academy of Sciences, Sofia, Bulgaria}
\curraddr{}
\email{denka@math.uiuc.edu}
\thanks{The third author was supported by Simons Foundation Collaborative Grant No 636954}

% author 4
\author[Randrianarivony]{N. Lovasoa Randrianarivony}
\address{Department of Mathematics and Statistics, Saint Louis University, St. Louis, MO 63103, USA}
\curraddr{}
\email{nrandria@slu.edu}
\thanks{The fourth author was partially supported by NSF grant DMS-1301591}

%    \subjclass is required by all journals except JAG.
%\subjclass[2010]{Primary }

\date{}

\dedicatory{}

%    The "communicated by" line appears only in PROC and JAG.
%\commby{}

%    Abstract is required.
\begin{abstract}
In his study of the Radon-Nikod\'ym property of Banach spaces, Bourgain showed (among other things) that in any closed, bounded, convex set $A$ that is nondentable, one can find a separated, weakly closed bush. In this note, we prove a generalization of Bourgain's result: in any bounded, nondentable set $A$ (not necessarily closed or convex) one can find a separated, weakly closed approximate bush. Similarly, we obtain as corollaries the existence of $A$-valued quasimartingales with sharply divergent behavior.
\end{abstract}

\maketitle

%    Text of article.

%%%%%%%%%%%%%%%%%%%%%%%%%%%%%%%%%%%%%%%%%%%%%%%%%%%%%%%%%%%%

%\keywords{Keyword1, Keyword2, Keyword3}

\section{Introduction}
We were motivated by the question of whether using the Kuratowski measure of noncompactness in place of diameter leads to a different  notion of dentability of (not necessarily closed or convex) subsets of $X$. Proposition \ref{prop:dentequiv} shows that they do not. This generalizes results from \cite[Chapitre 4]{B} where $A$ is assumed to be closed, bounded, and convex. In Section \ref{sec:sepbush}, we obtain as corollaries $A$-valued quasimartingales and $\cco{A}$-valued martingales with sharply divergent behavior (Corollaries \ref{cor:quasimartingale} and \ref{cor:martingale}) whenever $A$ is non-$\eps$-dentable. In Section \ref{sec:wcsepbush}, we improve the results of Section \ref{sec:sepbush} by showing that the range of the quasimartingale can be made weakly closed. As a further corollary, we show that one can find a countable set $F$ with $\lim_{F \ni f \to \infty} d(f,A) \to 0$ such that $\cco{F} \cap \Ext{\overline{co}^{w*}(F)} = \emptyset$ (Corollary \ref{cor:final}).

\section{Preliminaries}
For any topological vector space $V$ over $\R$ and $E \sbs V$, let $\co{E}$ denote the convex hull of $E$, and $\cco{E}$ the closure of $\co{E}$ in $V$. Henceforth, let $(X,\|\cdot\|)$ be a Banach space over $\R$. For $r > 0$ and $x \in X$, $B_r(x)$ denotes the open ball of radius $r$ centered at $x$. $B_X$ denotes the closed unit ball of $X$.

\begin{definition}
For any $A \sbs X$, let $\alpha(A)$ be the infimum over all $\eps > 0$ so that $A$ can be covered by finitely many sets of diameter at most $\eps$. $\alpha(A)$ is called the \emph{Kuratowksi measure of noncompactness} of $A$.
\end{definition}

\begin{definition}
For any bounded, nonempty $A \sbs X$, $f \in B_{X^*}$ (unit ball of $X^*$), and $\delta > 0$, we define the \emph{slice} $S(f,A,\delta)$, to be the set $\{a \in A: f(a) > \sup f(A) - \delta\}$. A \emph{slice of $A$} is a set $S(f,A,\delta)$ for some $f \in B_{X^*}$ and $\delta > 0$.
\end{definition}

\begin{remark} \label{rmk:sliceofcco}
Geometrically, a slice of $A$ is a nonempty intersection of $A$ with an open half-plane. Note that if $S(f,\cco{A},\delta)$ is a slice of $\cco{A}$, then $S(f,\cco{A},\delta) \cap A = S(f,A,\delta)$ is a slice of $A$. This is due to the fact that
$$\sup(f(\cco{A})) = \sup\left( \closure{f(\co{A})} \right) = \sup(f(\co{A})) = \sup(\co{f(A)}) = \sup(f(A))$$
\end{remark}

\begin{definition}
A bounded set $A \sbs X$ is called $\eps$\emph{-dentable} if there exists a slice of $A$ with diam$(A) \leq \eps$, and \emph{non-}$\eps$\emph{-dentable} otherwise. $A$ is \emph{dentable} if it is $\eps$-dentable for every $\eps > 0$, and \emph{nondentable} otherwise.
\end{definition}

\begin{remark} \label{rmk:nonepsdentable}
By Remark \ref{rmk:sliceofcco}, if $\cco{A}$ is $\eps$-dentable, $A$ is $\eps$-dentable.
\end{remark}

\begin{definition}
If $V$ is a topological vector space, $E \sbs V$ and $e \in E$, $e$ is called a \emph{denting point} of $E$ if $e \notin \cco{E \setminus U}$ for every neighborhood $U$ of $e$. Special cases are when $V$ is a Banach space equipped with the weak topology, or a dual Banach space equipped with the weak* topology, in which case we call $e$ a \emph{weak denting point} or a \emph{weak* denting point}, respectively.
\end{definition}

\begin{definition}
Let $\N^{<\omega}$ denote the set of finite length sequences of natural numbers. A \emph{tree} is a nonempty set $\T$ such that if $b \in \T$ and $b = (b',i)$ for some $b' \in \N^{<\omega}$ and $i \in \N$, then $b' \in \T$. In this case, $b$ is called a \emph{child} of $b'$. We say that $\T$ is \emph{finitely branching} if each $b \in \T$ has only finitely many children. If $b \in \T$ has $k$ children, we assume that they are $(b,1), \dots (b,k)$. Given a sequence $b \in \N^{<\omega}$, we let $|b|$ denote its length. For $n \in \N$, we let $\T_{\leq n} = \{b \in \T: |b| \leq n\}$, $\T_n = \{b \in \T: |b| = n\}$, and $\T_{\geq n} = \{b \in \T: |b| \geq n\}$. Given a positive sequence $(\delta_n)_{n \geq 0}$, finitely branching tree $\T$, and subset $(x_b)_{b \in \T} \sbs X$ indexed by $\T$, we say that $(x_b)_{b \in \T}$ is a $(\delta_n)_{n \geq 0}$-\emph{approximate bush} if for each $n \in \N$ and $b \in \T_n$ with children $(b,1), \dots (b,k)$, $b \in \co{x_{(b,1)}, \dots x_{(b,k)}} + B_{\delta_n}(0)$. If it always holds that $b \in \co{x_{(b,1)}, \dots x_{(b,k)}}$, then $(x_b)_{b \in \T}$ is a \emph{bush}. An approximate bush $(x_b)_{b \in \T}$ is $\delta$-\emph{separated} if for each $n \in \N$ and $b \in \T_n$ and child $(b,i)$, $\|x_b - x_{(b,i)}\| > \delta$.
\end{definition}

\begin{definition}
Given a filtration $(\A_n)_{n \geq 0}$ and a positive sequence $(\delta_n)_{n \geq 0}$, we say that a sequence of $X$-valued, $(\A_n)_{n \geq 0}$-adapted random variables $(M_n)_{n \geq 0}$ is a $(\delta_n)_{n \geq 0}$-\emph{quasimartingale} if
$$\|\E(M_{n+1} | \A_n) - M_n\|_{\infty} \leq \delta_n$$
for all $n \geq 0$. If $\|\E(M_{n+1} | \A_n) - M_n\|_{\infty} = 0$ always holds, $(M_n)_{n \geq 0}$ is a \emph{martingale}.
\end{definition}

The following proposition can be found in \cite[Lemme 4.2]{B}. For the sake of self-containment, we include our own proof here.

\begin{proposition} \label{prop:lemme4.2}
Let $\eps>0$ and $\delta>0$. Suppose that  $C$ and $C_1$   are closed,  bounded, convex sets with  $C_1$ properly contained in $C$. If $C =\overline{ \operatorname{co}}(C_1 \cup C_2)$, where $C_2$ is a  convex  subset  of $C$ and $\operatorname{diam}(C_2)<\eps$,  then
there exists a slice $S$ of $C$ with $S \subseteq C_2 + B_\delta(0)$. In particular, $C$ is $\eps$-dentable.
\end{proposition}

\begin{proof}
We may assume that $\operatorname{diam}(C) \le 1$. Since $C_1$ is a proper convex  subset of $C$, by Hahn-Banach separation there exists $f \in B_{X^*}$ such that 
$$\sup f(C_1) < M := \sup f(C)$$
Hence $C_1 \subseteq C\setminus S(f, C, \alpha)$ for some $\alpha>0$. So
$$C = \overline{\operatorname{co}}(( C\setminus S(f, C, \alpha)) \cup C_2)$$
For $\gamma>0$, let $S_\gamma = S(f,C,\gamma)$. Consider $y \in S_\gamma$. There exist $\lambda \in [0,1]$, $z_1\in \operatorname{co}( C\setminus S(f, C, \alpha))$, and $z_2 \in C_2$ such that $\|y - \lambda z_1 -(1-\lambda)z_2\| < \gamma$. Hence
\begin{align*}
M - \gamma &< f(y) \\
& \le f(\lambda z_1 +(1-\lambda)z_2) +\|y - \lambda z_1 -(1-\lambda)z_2\|\\
&< \lambda f(z_1) + (1-\lambda)f(z_2) + \gamma\\ &\le \lambda(M - \alpha) + (1-\lambda)M + \gamma\\&= M - \lambda \alpha + \gamma.
\end{align*}
Hence $\lambda < 2\gamma/\alpha$. So
$$\|y - z_2\| < \lambda\|z_1 - z_2\| + \gamma \le (2\gamma/\alpha) \operatorname{diam}(C) + \gamma \le \gamma(2/\alpha + 1)$$
So, setting $\gamma := \frac{\delta\alpha}{2+\alpha}$, we get $S:= S_\gamma \subseteq  C_2 + B_\delta(0)$. Note that $\operatorname{diam}(S) \le \operatorname{diam}(C_2)+ 2\delta < \eps$ for $\delta$ sufficiently small. So $C$ is $\eps$-dentable.
\end{proof}

We now derive a corollary of this proposition that will play a crucial role in the proof of Lemma \ref{lem:2}.

\begin{corollary} \label{cor:lemme4.2}
For any closed, bounded, convex, non-$\eps$-dentable $C \sbs X$, any closed, convex $C' \sbs C$, and any $D \sbs C$ with $\alpha(D) < \eps$, if $C = \cco{C' \cup D}$, then $C = C'$.
\end{corollary}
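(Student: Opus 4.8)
The plan is to reduce the statement to Proposition~\ref{prop:lemme4.2} by replacing the (possibly non-convex) set $D$ with a finite family of closed convex subsets of $C$, each of diameter strictly less than $\eps$, and then peeling those off one at a time.

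First I would dispose of the trivial case $D = \emptyset$ and otherwise fix some $\eps'$ with $\alpha(D) < \eps' < \eps$. By the definition of $\alpha$, $D$ can be covered by finitely many sets $E_1,\dots,E_n$ of diameter at most $\eps'$; after discarding any $E_i$ with $E_i \cap C = \emptyset$ I would put $D_i := \cco{E_i \cap C}$. Since forming the convex hull and then the closure leaves the diameter of a bounded set unchanged, each $D_i$ is a nonempty closed bounded convex subset of $C$ with $\operatorname{diam}(D_i) \le \eps' < \eps$, and $D \sbs \bigcup_{i=1}^{n}(E_i \cap C) \sbs \bigcup_{i=1}^{n} D_i$. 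Consequently $C = \cco{C' \cup D} \sbs \cco{C' \cup D_1 \cup \dots \cup D_n} \sbs C$, so $C = \cco{C' \cup D_1 \cup \dots \cup D_n}$.

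Next I would prove, by downward induction on $k$, that $\cco{C' \cup D_1 \cup \dots \cup D_k} = C$ for every $0 \le k \le n$. The base case $k = n$ is the identity just established. For the inductive step, assume the identity for some $k \ge 1$ and set $C_1 := \cco{C' \cup D_1 \cup \dots \cup D_{k-1}}$, which is a closed bounded convex subset of $C$. Because the closed convex hull of a union absorbs the closed convex hull of one of its members, $\cco{C_1 \cup D_k} = \cco{C' \cup D_1 \cup \dots \cup D_k} = C$, while $D_k$ is a convex subset of $C$ with $\operatorname{diam}(D_k) < \eps$. If $C_1$ were a proper subset of $C$, then Proposition~\ref{prop:lemme4.2} (applied with $C$, $C_1$, $D_k$ in the roles of $C$, $C_1$, $C_2$) would yield a slice of $C$ of diameter less than $\eps$, contradicting that $C$ is non-$\eps$-dentable; hence $C_1 = C$, completing the induction. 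Specializing to $k = 0$ gives $\cco{C'} = C$, and since $C'$ is closed and convex this is exactly $C' = C$.

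I expect the only real obstacle to be the reduction in the second paragraph: one has to make sure that a finite cover witnessing $\alpha(D) < \eps$ can be upgraded to a finite cover of $D$ by \emph{closed and convex} subsets of $C$ with diameters still below $\eps$, which is precisely where the (elementary) fact $\operatorname{diam}(\cco{E}) = \operatorname{diam}(E)$ for bounded $E$ enters. Once the problem is in the convex setting, Proposition~\ref{prop:lemme4.2} supplies all the real content, and the peeling induction is a formality.
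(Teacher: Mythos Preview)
Your proof is correct and follows essentially the same route as the paper: decompose $D$ into finitely many pieces of diameter $<\eps$, pass to their closed convex hulls inside $C$, and then peel them off one by one via Proposition~\ref{prop:lemme4.2} to conclude $C'=C$. Your version is a bit more careful (handling $D=\emptyset$, choosing an intermediate $\eps'$, and intersecting the covering sets with $C$), but the strategy and the key invocation of Proposition~\ref{prop:lemme4.2} are identical.
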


\begin{proof}
Let $C$, $C'$, and $D$ be as above. Assume $C = \cco{C' \cup D}$. Since $\alpha(D) < \eps$, $D = B_1 \cup B_2 \cup \dots B_n$ for some $B_i \sbs D$ with diam$(B_i) < \eps$. Let $C_i = \cco{B_i}$. Then diam$(C_i) =$ diam$(B_i) < \eps$, and $C = \cco{C' \cup C_1 \cup C_2 \cup \dots C_n}$. Since $C$ is closed, bounded, convex, and not $\eps$-dentable, and since $C_n \sbs C$ is closed, convex with diam$(C_n) < \eps$, Proposition \ref{prop:lemme4.2} (with $C_2 = C_n$ and $C_1 = \cco{C' \cup C_1 \cup C_2 \cup \dots C_{n-1}}$) implies that $C = \cco{C' \cup C_1 \cup C_2 \cup \dots C_{n-1}}$. Since diam$(C_{n-1}) < \eps$, we may apply Proposition \ref{prop:lemme4.2} again to obtain \\ $C = \cco{C' \cup C_1 \cup C_2 \cup \dots C_{n-2}}$. Iterating, we get $C = C'$.
\end{proof}

\section{$\delta$-Separated Martingales and Bushes} \label{sec:sepbush}
\begin{proposition} \label{prop:dentequiv}
Let $A \sbs X$ be bounded, and let $\eps > 0$. The following are equivalent:
\begin{enumerate}
\item $\alpha(S) \geq \eps$ for every slice $S \sbs A$.
\item diam$(S) \geq \eps$ for every slice $S$ of $A$ ($A$ is non-$\eps$-dentable).
\item diam$(S) \geq \eps$ for every slice $S$ of $\cco{A}$ ($\cco{A}$ is non-$\eps$-dentable).
\end{enumerate}
\end{proposition}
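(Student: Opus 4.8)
The plan is to establish the cyclic chain $(1) \Rightarrow (2) \Rightarrow (3) \Rightarrow (1)$. Two of the implications are essentially free. For $(1) \Rightarrow (2)$: every bounded set $E$ is covered by the single set $E$ itself, so $\alpha(E) \le \operatorname{diam}(E)$; applied to a slice $S$ of $A$ this gives $\operatorname{diam}(S) \ge \alpha(S) \ge \eps$. For $(2) \Rightarrow (3)$: this is precisely the contrapositive of Remark \ref{rmk:nonepsdentable} — if $\cco{A}$ were $\eps$-dentable then $A$ would be too, so $A$ non-$\eps$-dentable forces $\cco{A}$ non-$\eps$-dentable.

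The real work is $(3) \Rightarrow (1)$, and this is where Corollary \ref{cor:lemme4.2} is meant to be applied. Assume $(3)$, so that $C := \cco{A}$ is closed, bounded, convex, and non-$\eps$-dentable (it is bounded since $A$ is, and convex hulls of bounded sets are bounded). Let $S = S(f,A,\delta)$ be an arbitrary slice of $A$; we must show $\alpha(S) \ge \eps$, so suppose for contradiction that $\alpha(S) < \eps$. Put $C' := \cco{A \setminus S}$, a closed convex subset of $C$, and $D := S \subseteq C$, which has $\alpha(D) = \alpha(S) < \eps$. Since $A = (A \setminus S) \cup S \subseteq C' \cup D \subseteq C$, passing to closed convex hulls yields $C = \cco{C' \cup D}$. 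Corollary \ref{cor:lemme4.2} now gives $C = C'$; that is, $\cco{A} = \cco{A \setminus S}$.

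To reach the contradiction I would use the supremum identity from Remark \ref{rmk:sliceofcco}, which holds verbatim for any nonempty bounded set, namely $\sup f(\cco{E}) = \sup f(E)$. Writing $M := \sup f(A)$, the equality $\cco{A} = \cco{A \setminus S}$ gives $M = \sup f(\cco{A}) = \sup f(\cco{A \setminus S}) = \sup f(A \setminus S) \le M - \delta$, because every point of $A \setminus S$ has $f$-value at most $M - \delta$ by the definition of the slice. As $\delta > 0$, this is absurd, so $\alpha(S) \ge \eps$ and $(1)$ holds. (The edge cases are handled trivially: $A = \emptyset$ makes all three statements vacuous, and $A \setminus S = \emptyset$ means $\alpha(A) = \alpha(S) < \eps$, so splitting $A$ into finitely many sets of diameter $< \eps$ and peeling them off one at a time via Corollary \ref{cor:lemme4.2}, exactly as in that corollary's proof, shows $\cco{A}$ has diameter $< \eps$ and is therefore $\eps$-dentable, contradicting $(3)$.)

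The only genuine obstacle is the ``peeling'' step in $(3) \Rightarrow (1)$: that removing a subset of Kuratowski measure $< \eps$ from a non-$\eps$-dentable closed bounded convex set leaves its closed convex hull unchanged. But this has already been isolated as Corollary \ref{cor:lemme4.2} (which rests on Bourgain's Lemme 4.2), so the new content here is just the choice to apply it with $C' = \cco{A \setminus S}$ and $D = S$ and then to read off the contradiction from the inequality defining the slice. It is worth noting that we apply Corollary \ref{cor:lemme4.2} with $D$ equal to the slice $S$ itself — neither closed nor convex — which is legitimate since that corollary imposes no regularity on $D$ beyond $\alpha(D) < \eps$.
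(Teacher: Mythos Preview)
Your proof is correct and mirrors the paper's: the same cyclic chain, with Corollary \ref{cor:lemme4.2} doing the work in $(3)\Rightarrow(1)$. The one difference is that the paper takes $C' = C \setminus S(f,C,\delta)$ (the complement in $C$ of the corresponding slice of $C = \cco{A}$) rather than your $\cco{A\setminus S}$; since $C\setminus S(f,C,\delta)$ is already closed, convex, and visibly a proper subset of $C$, the contradiction from $C = C'$ is immediate, sparing both your supremum computation and the $A\setminus S=\emptyset$ edge case.
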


\begin{proof}
Let $A$, $\eps$ be as above. $(1) \to (2)$ is clear from definition of $\alpha$. $(2) \to (3)$ follows from the fact that every slice of $\cco{A}$ contains a slice of $A$. We now show $(3) \to (1)$ by contradiction. Let $C = \cco{A}$, assume that $C$ is non-$\eps$-dentable and that there exists a slice $S = S(f,A,\delta)$ of $A$ with $\alpha(S) < \eps$. Set $S_C = S(f,C,\delta)$. Then since $C \setminus S_C$ is a closed convex subset of $C$ and $C = \cco{(C \setminus S_C) \cup S}$. Then Corollary \ref{cor:lemme4.2} implies $C = C \setminus S_C$, a contradiction since $S_C \sbs C$ and $S_C$ is nonempty.
\end{proof}

As in \cite[Chapitre 4]{B}, we obtain several corollaries.

\begin{corollary} \label{cor:1}
For any $A \sbs X$ bounded and $\eps > 0$, if $A$ is non-$\eps$-dentable, then for all $\delta < \frac{\eps}{2}$ and all $a_1, a_2, \dots a_n \in A$, $\cco{A} = \cco{A \setminus (B_\delta(a_1) \cup B_\delta(a_2) \cup \dots B_\delta(a_n))}$.
\end{corollary}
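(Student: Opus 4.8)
The plan is to deduce Corollary \ref{cor:1} directly from Proposition \ref{prop:dentequiv} together with the same separation-and-covering argument used in Corollary \ref{cor:lemme4.2}. Write $C = \cco{A}$ and $D = B_\delta(a_1) \cup \dots \cup B_\delta(a_n)$. Since $\delta < \eps/2$, each ball $B_\delta(a_i)$ has diameter at most $2\delta < \eps$, so $\alpha(A \cap D) < \eps$; also clearly $A = (A \setminus D) \cup (A \cap D)$, hence $C = \cco{A} = \cco{(A \setminus D) \cup (A \cap D)}$. The trivial inclusion $\cco{A \setminus D} \sbs \cco{A} = C$ is immediate, so the content is the reverse inclusion $C \sbs \cco{A \setminus D}$.

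For the reverse inclusion, set $C' = \cco{A \setminus D}$, a closed convex subset of $C$. By Proposition \ref{prop:dentequiv}, since $A$ is non-$\eps$-dentable, $\cco{A} = C$ is non-$\eps$-dentable (this is exactly the implication $(2) \to (3)$). Now I would like to invoke Corollary \ref{cor:lemme4.2} with this $C$, this $C'$, and $D' := A \cap D \sbs C$; we have $\alpha(D') < \eps$ and $C = \cco{C' \cup D'}$, so the corollary yields $C = C'$, i.e. $\cco{A} = \cco{A \setminus D}$, as desired.

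The one point requiring a little care — and the main (minor) obstacle — is checking that $C = \cco{C' \cup D'}$ holds in the precise sense required by Corollary \ref{cor:lemme4.2}, namely that $C'$ really is a closed convex subset of $C$ and $D'$ really is a subset of $C$. Both are clear: $A \setminus D \sbs A \sbs \cco{A} = C$ gives $C' = \cco{A \setminus D} \sbs C$ by convexity and closedness of $C$, and $D' = A \cap D \sbs A \sbs C$. Finally $\cco{C' \cup D'} = \cco{\cco{A\setminus D} \cup (A \cap D)} = \cco{(A \setminus D) \cup (A \cap D)} = \cco{A} = C$, using that the closed convex hull of a set and of its closed convex hull agree. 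Thus all hypotheses of Corollary \ref{cor:lemme4.2} are met and the conclusion follows. (One could equally well bypass Corollary \ref{cor:lemme4.2} and run the iterated Hahn–Banach / Proposition \ref{prop:lemme4.2} argument directly on the finitely many balls, but invoking the already-proved corollary is cleaner.)
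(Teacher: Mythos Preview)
Your proof is correct. It differs in structure from the paper's argument, though the two are close cousins.

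The paper argues by contradiction directly: if some $x \in \cco{A} \setminus \cco{A \setminus D}$ existed (with $D = \bigcup_i B_\delta(a_i)$), Hahn--Banach separation would yield a slice $S$ of $\cco{A}$ missing $\cco{A \setminus D}$; then $S \cap A$ is a slice of $A$ contained in $D$, hence $\alpha(S \cap A) \le 2\delta < \eps$, contradicting part~(1) of Proposition~\ref{prop:dentequiv}. So the paper invokes the \emph{nontrivial} implication $(2)\Rightarrow(1)$ of Proposition~\ref{prop:dentequiv} and does the separation argument by hand.

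You instead use only the easy implication $(2)\Rightarrow(3)$ to get that $C=\cco{A}$ is non-$\eps$-dentable, and then feed $C$, $C'=\cco{A\setminus D}$, and $D'=A\cap D$ directly into Corollary~\ref{cor:lemme4.2}. This is a clean packaging: the Hahn--Banach step and the iteration over the finitely many balls are already absorbed into Corollary~\ref{cor:lemme4.2}, so you avoid repeating them. The two routes are logically equivalent (the hard direction of Proposition~\ref{prop:dentequiv} is itself proved via Corollary~\ref{cor:lemme4.2}), but yours is arguably more modular, while the paper's makes the role of the $\alpha$-characterization of non-$\eps$-dentability explicit.
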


\begin{proof}
Let $A$, $\eps$, $\delta$, and $a_1,a_2, \dots a_n$ be as above. Suppose there exists $x \in \cco{A} \setminus \cco{A \setminus (B_\delta(a_1) \cup B_\delta(a_2) \cup \dots B_\delta(a_n))}$. By Hahn-Banach separation, we can pick a slice $S$ of $\cco{A}$ containing $x$ and disjoint from \\
$\cco{A \setminus (B_\delta(a_1) \cup B_\delta(a_2) \cup \dots B_\delta(a_n))}$. Then $S \cap A$ is a slice of $A$ disjoint from $A \setminus (B_\delta(a_1) \cup B_\delta(a_2) \cup \dots B_\delta(a_n))$, and thus $S \cap A \sbs B_\delta(a_1) \cup B_\delta(a_2) \cup \dots B_\delta(a_n)$, which implies $\alpha(S \cap A) \leq 2\delta < \eps$, contradicting Proposition \ref{prop:dentequiv}.
\end{proof}

We can use Corollary \ref{cor:1} to construct $A$-valued quasimartingales and $\cco{A}$-valued martingales that diverge in a sharp manner.

\begin{corollary} \label{cor:quasimartingale}
For any nonempty, bounded, non-$\eps$-dentable $A \sbs X$, any $\delta < \frac{\eps}{2}$, and any positive, summable sequence $(\delta_n)_{n \geq 0}$, there exists a filtration of finite $\sigma$-algebras $(\mathcal{A}_n)_{n \geq 0}$ on $[0,1]$, each of whose atoms are intervals, and an $(\A_n)_n$-adapted sequence of random variables $(M_n)_{n \geq 0}$ such that, for all $s,t \in [0,1]$ and $m \neq n \geq 0$,
\begin{enumerate}
\item $M_n$ takes values in $A$.
\item $\|M_n(s) - M_m(t)\| > \delta$.
\item $(M_n)_{n \geq 0}$ is a $(\delta_n)_{n \geq 0}$-quasimartingale: $\|\E(M_{n+1} | \A_n) - M_n\|_\infty < \delta_n$.
\end{enumerate}
\end{corollary}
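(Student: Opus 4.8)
The plan is to build the filtration $(\A_n)$ and the random variables $(M_n)$ recursively, level by level, using Corollary \ref{cor:1} at each step to split atoms while staying inside $A$ and keeping the values $\delta$-separated. First I would fix a positive summable sequence $(\eta_n)$ with $\sum \eta_n$ small, to be used as the approximation budget; the quasimartingale errors $\delta_n$ will be produced from the $\eta_n$. I start with $\A_0 = \{\emptyset, [0,1]\}$ and pick any $a_\emptyset \in A$, setting $M_0 \equiv a_\emptyset$. Suppose at stage $n$ we have a finite $\sigma$-algebra $\A_n$ whose atoms are intervals $I_b$ indexed by the nodes $b$ of a finite tree $\T_{\le n}$, a point $a_b \in A$ assigned to each atom, and $M_n = \sum_b a_b \mathbf{1}_{I_b}$, with all the $a_b$ pairwise (and cross-level) $\delta$-separated. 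For each atom $I_b$, apply Corollary \ref{cor:1} with the finitely many centers being $a_b$ together with \emph{all} points $a_{b'}$ used so far (finitely many): this gives $a_b \in \cco{A} = \cco{A \setminus \bigcup_j B_\delta(a_{b'_j})}$, so $a_b$ is a finite convex combination $\sum_i \lambda_i^{(b)} a_{(b,i)}$ of points $a_{(b,i)} \in A \setminus \bigcup_j B_\delta(a_{b'_j})$ up to error $\eta_n$; that is, $\|a_b - \sum_i \lambda_i^{(b)} a_{(b,i)}\| < \eta_n$. I then subdivide $I_b$ into subintervals $I_{(b,i)}$ of length $|I_b|\lambda_i^{(b)}$, declare these the atoms of $\A_{n+1}$, and set $M_{n+1} = \sum_{b,i} a_{(b,i)} \mathbf{1}_{I_{(b,i)}}$.

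The three conclusions then follow. Item (1) is immediate since every value $a_{(b,i)}$ was chosen in $A$. Item (2) holds because at every stage the newly chosen points are forced to lie outside $B_\delta$ of every previously chosen point (and of each other, by including them among the centers as they are selected, or by a small additional argument treating points chosen at the same stage): hence $\|a_b - a_{b'}\| > \delta$ for all distinct nodes $b,b'$ across all levels, which gives $\|M_n(s) - M_m(t)\| > \delta$ for $m \ne n$, and indeed for all pairs from distinct atoms. Item (3): on each atom $I_b$ of $\A_n$, $\E(M_{n+1}\mid \A_n)$ equals $\frac{1}{|I_b|}\sum_i |I_{(b,i)}| \, a_{(b,i)} = \sum_i \lambda_i^{(b)} a_{(b,i)}$, which is within $\eta_n$ of $a_b = M_n$ on $I_b$; so $\|\E(M_{n+1}\mid\A_n) - M_n\|_\infty \le \eta_n$, and choosing $\eta_n < \delta_n$ finishes it.

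The main obstacle is the bookkeeping needed to guarantee \emph{global} $\delta$-separation — not just $M_{n+1}$ versus $M_n$, but all values across all levels — while only ever invoking Corollary \ref{cor:1} with finitely many excluded balls at each step. The clean way is to enumerate all nodes of the (countable, finitely branching) tree so that a node's children are chosen only after all "earlier" nodes have been processed, and when choosing the children of $b$ one excludes $B_\delta$-balls around every point $a_{b'}$ already assigned; since only finitely many points have been assigned before any given step, Corollary \ref{cor:1} applies. One must also check the separation between siblings $a_{(b,i)}, a_{(b,j)}$ and between $a_{(b,i)}$ and its parent $a_b$: the former can be arranged by choosing the children one at a time and adding each to the exclusion list before choosing the next, and the latter is automatic since $a_b$ is among the excluded centers when its children are chosen. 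A minor point is ensuring the convex-combination approximation can be taken with \emph{strictly} positive weights $\lambda_i^{(b)}$ so that the subintervals $I_{(b,i)}$ are nondegenerate and $\A_{n+1}$ genuinely refines $\A_n$ with interval atoms; discarding zero-weight terms handles this.
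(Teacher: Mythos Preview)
Your proposal is correct and follows essentially the same inductive construction as the paper: start with a single point, and at each stage apply Corollary~\ref{cor:1} (excluding balls around \emph{all} values already used) to approximate each current value by a convex combination of $\delta$-far points of $A$, then subdivide atoms by the convex weights. The only differences are cosmetic: the paper uses $\delta_n$ directly rather than an auxiliary $\eta_n<\delta_n$, and---more importantly---your ``main obstacle'' paragraph about sibling separation is unnecessary, since the statement only asks for $\|M_n(s)-M_m(t)\|>\delta$ when $m\neq n$, which is exactly what the exclusion of all previously-chosen points already guarantees (the paper does not attempt, and does not need, same-level separation).
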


\begin{proof}
Let $A \sbs X$ and $\delta > 0$ be as above. We construct the martingale inductively. Let $x_0$ be any point of $A$, $\A_0$ the trivial $\sigma$-algebra, and $M_0 \equiv x_0$. Suppose that, for some $N \in \N$, $\A_n$ and $M_n$ have been constructed for all $n \leq N$ and satisfy the conclusion of the Corollary \ref{cor:quasimartingale}. Let $J$ be an atom of $\A_N$, and let $x_J$ be the value of $M_N$ on $J$. Let $\{a_1, a_2, \dots a_k\} \sbs A$ be the set of all elements in the image of any one of the $M_n$, $n \leq N$. By Corollary \ref{cor:1}, $x_J \in \cco{A \setminus (B_\delta(a_1) \cup B_\delta(a_2) \cup \dots B_\delta(a_k))}$. Thus, there exists $z_J \in \co{A \setminus (B_\delta(a_1) \cup B_\delta(a_2) \cup \dots B_\delta(a_k))}$ such that $\|x_J - z_J\| < \delta_N$. Since $z_J \in \co{A \setminus (B_\delta(a_1) \cup B_\delta(a_2) \cup \dots B_\delta(a_k))}$, $z_J = \lambda_1 z^1_J + \lambda_2 z^2_J + \dots \lambda_m z^m_J$ for some $z^1_J, z^2_J, \dots z^m_J \in A$ and $\lambda_1, \lambda_2, \dots \lambda_m \in (0,1)$ with $\lambda_1 + \lambda_2 + \dots \lambda_m = 1$ and $\|z^i_J - a_j\| > \delta$ for all $i \leq m$ and $j \leq k$. Now we subdivide the interval $J$ into $m$ pairwise disjoint subintervals, $J_1, J_2, \dots J_m$, with $|J_i| = \lambda_i|J|$ for each $i$. Repeating this process for each atom $J \in \A_N$ gives us a collection of pairwise disjoint intervals, and we define $\A_{N+1}$ to be the $\sigma$-algebra that they generate. On each $J_i$, we define $M_{N+1}$ to be $z^i_J$. Then conclusions (1) and (2) hold, and (3) holds since $\|\E(M_{N+1}|\A_N)-M_N\|_\infty = \sup_{J,i} \|z_J-z^i_J\| < \delta_N$.
\end{proof}

\begin{corollary} \label{cor:martingale}
For any nonempty, bounded, non-$\eps$-dentable $A \sbs X$, any $\delta < \frac{\eps}{2}$, and any positive, summable sequence $(\delta_n)_{n \geq 0}$, there exists a filtration of finite $\sigma$-algebras $(\mathcal{A}_n)_{n \geq 0}$ on $[0,1]$, each of whose atoms are intervals, an $(\A_n)_{n \geq 0}$-adapted quasimartingale $(M_n)_{n \geq 0}$, and an $(\A_n)_{n \geq 0}$-adapted martingale $(\overline{M}_n)_{n \geq 0}$ such that, for all $s,t \in [0,1]$ and $m \neq n \geq 0$,
\begin{enumerate}
\item $M_n$ takes values in $A$.
\item $\overline{M}_n$ takes values in $\cco{A}$.
\item $\|M_n - \overline{M}_n\|_\infty < \delta_n$.
\item $\|M_n(s) - M_m(t)\|, \|\overline{M}_n(s) - \overline{M}_m(t)\| > \delta$.
\end{enumerate}
\end{corollary}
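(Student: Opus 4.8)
The plan is to invoke Corollary \ref{cor:quasimartingale} to get the filtration $(\A_n)_{n\geq 0}$ and the $A$-valued quasimartingale $(M_n)_{n\geq 0}$ satisfying properties (1) and (4) (property (4) here is just a relabeling of property (2) there, and the summability hypothesis on $(\delta_n)$ is inherited), and then to manufacture the $\cco{A}$-valued \emph{genuine} martingale $(\overline M_n)_{n\geq 0}$ from $(M_n)_{n\geq 0}$ by a standard ``backward correction'' of the quasimartingale increments. Concretely, write $g_n := \E(M_{n+1}\mid \A_n) - M_n$, so that $\|g_n\|_\infty < \delta_n$ by property (3) of Corollary \ref{cor:quasimartingale}. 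Define
\[
\overline M_n := M_n + \sum_{k \geq n} \E(g_k \mid \A_n);
\]
the series converges in $L^\infty$ since $\|\E(g_k\mid\A_n)\|_\infty \le \|g_k\|_\infty < \delta_k$ and $(\delta_k)$ is summable, and each $\overline M_n$ is $\A_n$-measurable because every term is. First I would check the martingale property: using the tower property and $M_{n+1} = M_n + g_n - (M_{n+1}-\E(M_{n+1}\mid\A_n)) + (M_{n+1}-\E(M_{n+1}\mid\A_n))$—more cleanly, a direct computation gives $\E(\overline M_{n+1}\mid\A_n) = \E(M_{n+1}\mid\A_n) + \sum_{k\geq n+1}\E(g_k\mid\A_n) = M_n + g_n + \sum_{k\geq n+1}\E(g_k\mid\A_n) = M_n + \sum_{k\geq n}\E(g_k\mid\A_n) = \overline M_n$, so $(\overline M_n)$ is a martingale.

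Next, property (3): $\|\overline M_n - M_n\|_\infty = \big\|\sum_{k\geq n}\E(g_k\mid\A_n)\big\|_\infty \le \sum_{k\geq n}\delta_k$. This is not quite ``$<\delta_n$'' as stated, so to land exactly on the claimed inequality I would, at the outset, feed Corollary \ref{cor:quasimartingale} a rescaled summable sequence $(\delta_n')$ chosen so that $\sum_{k\geq n}\delta_k' < \delta_n$ for every $n$ (for instance $\delta_n' := 2^{-n-1}\delta_n$ if $(\delta_n)$ is nonincreasing, or more robustly $\delta_n' := \min_{k\le n}\delta_k \cdot 2^{-n-1}$), which is harmless since the separation constant $\delta$ and the $A$-valued property do not depend on the size of the quasimartingale defect. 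For property (2), note $\overline M_n$ is an $L^\infty$-limit of functions of the form $M_n + (\text{finite partial sums of }\E(g_k\mid\A_n))$; since each such partial sum is an $\A_n$-measurable simple function and $\cco{A}$ is closed and convex, it suffices to see that each value lies in $\cco{A}$. Here I would argue pointwise on an atom $J$ of $\A_n$: the finite partial sum value at $J$ is a convex combination (with appropriate weights coming from the conditional expectations over the finer algebras, which are convex combinations of $A$-valued data) of points of $\cco{A}$, hence lies in $\cco{A}$; passing to the $L^\infty$-limit and using closedness of $\cco{A}$ gives $\overline M_n(J)\in\cco{A}$. Finally, property (4) for $\overline M$ follows from property (4) for $M$ together with property (3): for $m\neq n$ and $s,t\in[0,1]$,
\[
\|\overline M_n(s) - \overline M_m(t)\| \geq \|M_n(s) - M_m(t)\| - \|\overline M_n - M_n\|_\infty - \|\overline M_m - M_m\|_\infty > \delta - \sum_{k\geq n}\delta_k' - \sum_{k\geq m}\delta_k',
\]
so by choosing $(\delta_n')$ small enough at the outset (e.g. so that $2\sum_k \delta_k' < \delta' $ for a slightly larger separation $\delta'>\delta$ fed into Corollary \ref{cor:quasimartingale} in place of $\delta$, which is permissible as long as $\delta' < \eps/2$), we recover $\|\overline M_n(s)-\overline M_m(t)\| > \delta$.

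The main obstacle I anticipate is purely bookkeeping: getting all three small quantities—the quasimartingale defect bound $\delta_n$ in (3), and the two error terms degrading the separation in (4)—to come out with the exact constants claimed, rather than up to harmless constants. The clean fix is to prove the corollary with Corollary \ref{cor:quasimartingale} applied to the pair $(\delta', (\delta_n'))$ where $\delta' \in (\delta, \eps/2)$ and $(\delta_n')$ is summable with $2\sum_{k\geq 0}\delta_k' < \delta' - \delta$ and $\sum_{k \geq n}\delta_k' < \delta_n$ for all $n$; every displayed estimate above then closes. The martingale verification and the membership $\overline M_n \in \cco{A}$ are routine once one observes that conditional expectation of an $A$-valued (resp. $\cco{A}$-valued) simple function is $\cco{A}$-valued, which is immediate from convexity.
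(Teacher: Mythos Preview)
Your proposal is correct and essentially identical to the paper's proof: your series definition $\overline{M}_n = M_n + \sum_{k\geq n}\E(g_k\mid\A_n)$ telescopes to the paper's $\overline{M}_n = \lim_{k\to\infty}\E(M_k\mid\A_n)$, and your bookkeeping fix (apply Corollary~\ref{cor:quasimartingale} with a larger separation $\delta'\in(\delta,\eps/2)$ and an auxiliary sequence $(\delta_n')$ with $\sum_{k\geq n}\delta_k'<\delta_n$ and total sum $<\delta'-\delta$) is exactly what the paper does with its $(\gamma_k)$. The one place your write-up could be tightened is the verification of (2): once you observe the telescoping identity $M_n+\sum_{k=n}^{K-1}\E(g_k\mid\A_n)=\E(M_K\mid\A_n)$, the $\cco{A}$-valuedness is immediate, whereas your phrasing leaves the convex-combination claim slightly implicit.
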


\begin{proof}
Let $A \sbs X$, and $\delta > 0$ be as above. Choose $\delta' \in (\delta,\frac{\eps}{2})$ and assume $\sum_{n=0}^\infty \delta_n < \delta' - \delta$. Choose a positive sequence $(\gamma_k)_{k \geq 0}$ such that $\sum_{k=n}^\infty \gamma_k < \delta_n$, and note that this implies $\sum_{n=0}^\infty \gamma_n < \sum_{n=0}^\infty \delta_n < \delta' - \delta$. By Corollary \ref{cor:quasimartingale}, there is a filtration $(\A_n)_{n \geq 0}$ and an $A$-valued, $(\A_n)_{n \geq 0}$-adapted quasimartingale $(M_n)_{n \geq 0}$ such that $\|M_n(s) - M_m(t)\| > \delta'$ for all $s,t \in [0,1]$, $m \neq n$, and $\|\E(M_{n+1}|\A_n) - M_n\|_\infty < \gamma_n$. This inequality, together with the fact that $(\delta_n)_{n \geq 0}$ is summable (and thus convergent to 0), implies, for each $n \geq 0$, the sequence $(\E(M_{k}|\A_n))_{k \geq n}$ is Cauchy in $L^\infty(I;X)$. Indeed, for $k > j \geq n$,
$$\|\E(M_k - M_j|\A_n)\|_{L^\infty(I;X)} \leq \sum_{r=j}^{k-1} \|\E(M_{r+1} - M_r|\A_n)\|_{L^\infty(I;X)}$$
$$\leq \sum_{r=j}^{k-1} \|\E(M_{r+1} - M_r|\A_r)\|_{L^\infty(I;X)} = \sum_{r=j}^{k-1} \|\E(M_{r+1}|\A_r) - M_r\|_{L^\infty(I;X)}$$
$$\leq \sum_{r=j}^{k-1} \gamma_r \leq \delta_j$$
Thus we may set $\overline{M}_n := \lim_{k \to \infty} \E(M_{k}|\A_n)$. Clearly, $(\overline{M}_n)_{n \geq 0}$ is adapted to $(\A_n)_{n \geq 0}$ and takes values in $\cco{A}$, showing (2). Let us check the martingale property: 
$$\E(\overline{M}_{n+1} | \A_n) = \E(\lim_{k \to \infty} \E(M_{k}|\A_{n+1}) | \A_n) = \lim_{k \to \infty} \E(\E(M_{k}|\A_{n+1}) | \A_n)$$
$$= \lim_{k \to \infty} \E(M_{k}| \A_n) = \overline{M}_{n+1}$$
showing (1). Next,
$$\|\overline{M}_n - M_n\|_\infty \leq \sum_{k=n}^\infty \|\E(M_{k+1}-M_k|\A_n)\|_\infty \leq \sum_{k=n}^\infty \|\E(M_{k+1}-M_k|\A_k)\|_\infty$$
$$= \sum_{k=n}^\infty \|\E(M_{k+1}|\A_k) - M_k\|_\infty \leq \sum_{k=n}^\infty \gamma_k < \delta_n$$
showing (3). We then use (3) to show (4):
$$\|\overline{M}_n(s) - \overline{M}_m(t)\| \geq \|M_n(s) - M_m(t)\| - \delta_n - \delta_m > \delta' - (\delta' - \delta) = \delta$$
\end{proof}

\begin{remark}
The union over $n$ of the image of $M_n$ forms a $\delta$-separated bush in $\cco{A}$. It  is norm closed and lacks extreme points.
\end{remark}

\section{Weakly Closed $\delta$-separated Martingales and Bushes} \label{sec:wcsepbush}
In this section, we sharpen our results from the previous section by constructing an $A$-valued $\delta$-separated approximate bush that is weakly closed. The argument is more involved than those of the previous section. This again extends results from Bourgain in \cite{B}. $A$ is not assumed to be closed or convex in our case. 

\begin{definition}
Let $A \sbs B_X$ and let $C = \cco{A}$. For any $\gamma \in (0,1)$ and slice $S = S(f,C,\delta)$ of $C$, we define $S^\gamma = S\left(f,C,\frac{\gamma\delta}{2}\right)$. $S^\gamma$ is called a $\boldsymbol{\gamma}$\textbf{-shallow parallel of} $\boldsymbol{S}$.
\end{definition}

\begin{lemma} \label{lem:1}
For any $C \sbs B_X$ closed and convex, any $\gamma \in (0,1)$, and any slice $S$ of $C$, $S^\gamma \sbs S$. For any $E \sbs C$ for which $C = \cco{(C \setminus S) \cup E}$, $S^\gamma \sbs \cco{E} + \closure{B}_\gamma(0) \sbs \co{E} + B_{2\gamma}(0)$.
\end{lemma}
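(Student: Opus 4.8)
The plan is to unwind the definitions and run essentially the same slicing estimate that appeared in the proof of Proposition \ref{prop:lemme4.2}, but keeping track of the distance to $\co{E}$ rather than to a single small convex piece. First I would dispose of the inclusion $S^\gamma \sbs S$: writing $S = S(f,C,\delta)$, by definition $S^\gamma = S(f,C,\tfrac{\gamma\delta}{2})$, and since $\gamma \in (0,1)$ we have $\tfrac{\gamma\delta}{2} < \delta$, so the defining inequality $f(x) > \sup f(C) - \tfrac{\gamma\delta}{2}$ is stronger than $f(x) > \sup f(C) - \delta$; hence $S^\gamma \sbs S$. The second inclusion $\cco{E} + \closure B_\gamma(0) \sbs \co{E} + B_{2\gamma}(0)$ is also immediate: any point of $\cco{E}$ is within $\gamma$ (in fact arbitrarily close) of a point of $\co{E}$, so $\cco E + \closure B_\gamma(0) \sbs \co E + B_{\gamma}(0) + \closure B_\gamma(0) \sbs \co E + B_{2\gamma}(0)$.

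The substance is the middle inclusion $S^\gamma \sbs \cco E + \closure B_\gamma(0)$. Fix $M := \sup f(C)$. Because $S = S(f,C,\delta)$ is a slice, $C \setminus S = \{x \in C : f(x) \le M - \delta\}$ is a closed convex subset of $C$ on which $f \le M - \delta$. Now take any $y \in S^\gamma$, so $f(y) > M - \tfrac{\gamma\delta}{2}$. Using the hypothesis $C = \cco{(C\setminus S)\cup E}$, for any prescribed $\eta > 0$ there are $\lambda \in [0,1]$, $z_1 \in \co{C\setminus S} = C\setminus S$ (convexity), and $z_2 \in \co E$ with $\|y - \lambda z_1 - (1-\lambda) z_2\| < \eta$. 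Applying $f$ and using $\|f\| \le 1$, $f(z_1) \le M-\delta$, $f(z_2) \le M$, exactly as in Proposition \ref{prop:lemme4.2} we get
$$
M - \tfrac{\gamma\delta}{2} - \eta < f(y) - \eta < \lambda(M-\delta) + (1-\lambda)M = M - \lambda\delta,
$$
hence $\lambda\delta < \tfrac{\gamma\delta}{2} + \eta$, i.e. $\lambda < \tfrac{\gamma}{2} + \tfrac{\eta}{\delta}$. Since $C \sbs B_X$, $\|z_1 - z_2\| \le 2$, so
$$
\|y - z_2\| \le \lambda\|z_1 - z_2\| + \eta < 2\lambda + \eta < \gamma + \tfrac{2\eta}{\delta} + \eta.
$$
Thus $y$ lies within $\gamma + (2/\delta + 1)\eta$ of the point $z_2 \in \co E \sbs \cco E$. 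Letting $\eta \to 0$ shows $d(y, \cco E) \le \gamma$, i.e. $y \in \cco E + \closure B_\gamma(0)$, which is the desired inclusion.

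I do not anticipate a genuine obstacle here; the only points requiring a little care are: (i) using $\sup f(C) = \sup f(\cco E \cup (C\setminus S))$-type identities implicitly (these are the same convex-hull/closure facts recorded in Remark \ref{rmk:sliceofcco}) to justify that $f \le M - \delta$ on all of $\co{C\setminus S}$ and $f \le M$ on $\co E$; (ii) getting the constants to land exactly on $\gamma$ and $2\gamma$ — this is why $S^\gamma$ is defined with the factor $\tfrac{\gamma}{2}$ and why the ambient set is normalized to sit in $B_X$, so that $\operatorname{diam} \le 2$ converts $\lambda \lesssim \gamma/2$ into a displacement $\lesssim \gamma$; and (iii) the harmless $\eta$-approximation coming from the closure in $\cco{(C\setminus S)\cup E}$, handled by letting $\eta \to 0$ and absorbing the error into the closed ball $\closure B_\gamma(0)$.
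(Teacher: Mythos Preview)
Your proposal is correct and follows essentially the same approach as the paper's proof: the same slicing estimate (as in Proposition~\ref{prop:lemme4.2}) is used to bound $\lambda$, and then the diameter bound $\operatorname{diam}(C)\le 2$ converts this into the distance estimate $\|y-z_2\|\le \gamma + O(\eta)$. The only cosmetic differences are that you take $z_2\in\co{E}$ whereas the paper takes $z_2\in\cco{E}$, and your triangle inequality $\|y-z_2\|\le\lambda\|z_1-z_2\|+\eta$ is marginally more direct than the paper's two-step version; neither affects the argument.
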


\begin{proof}
Let $\gamma \in (0,1)$ and $S = S(f,C,\delta)$ a slice of $C$. Since $\gamma \in (0,1)$, $\frac{\gamma\delta}{2} < \delta$ implying $S^\gamma = S\left(f,C,\frac{\gamma\delta}{2}\right) \sbs S(f,C,\delta) = S$. For the second part, let $E \sbs C$ such that $C = \cco{(C \setminus S) \cup E}$. Let $y \in S^\gamma$, $\eps > 0$, and $M := \sup(f(C))$. Since $y \in C = \cco{(C \setminus S) \cup E}$, there exist $\lambda \in [0,1]$, $z_1 \in (C \setminus S)$, $z_2 \in \cco{E}$, and $u \in X$ with $\|u\| < \eps$ such that $y = \lambda z_1 + (1-\lambda)z_2 + u$. Then we have

$\begin{array}{lll}
M - \frac{\gamma\delta}{2} & < & f(y) \\
& = & \lambda f(z_1) + (1-\lambda) f(z_2) + f(u) \\
& < & \lambda (M - \delta) + (1-\lambda)M + \eps
\end{array}$

implying $\lambda < \frac{\gamma}{2} + \frac{\eps}{\delta}$. Hence,

$\begin{array}{lll}
\|y-z_2\| & \leq & \|y - (1-\lambda)z_2\| + \|(1-\lambda)z_2-z_2\| \\
& =      & \|\lambda z_1 + u\| + \|\lambda z_2\| \\
& \leq   & 2\lambda + \eps \\
& \leq   & \gamma + \frac{2\eps}{\delta} + \eps
\end{array}$

Since $\eps > 0$ was arbitrary, this shows $y \in B_\gamma(z_2) \sbs \cco{E} + \closure{B}_\gamma(0)$. The final containment $\cco{E} + \closure{B}_\gamma(0) \sbs \co{E} + B_{2\gamma}(0)$ obviously holds.
\end{proof}

\begin{lemma} \label{lem:2}
Let $A \sbs X$ be bounded, nonempty, and non-$\eps$-dentable, and let $C = \cco{A}$ (by Remark \ref{rmk:nonepsdentable}, $C$ is non-$\eps$-dentable). For any slice $S_0$ of $C$, $D \sbs C$ with $\alpha(D) < \eps$, and $\gamma \in (0,1)$, let $\mathcal{S}(S_0,D)$ be the collection of all slices $S$ of $C$ with $S \sbs S_0 \setminus D$ and $\mathcal{S}^\gamma(S_0,D) = \{S^\gamma\}_{S \in \mathcal{S}(S_0,D)}$. Let $\Lambda = \Lambda(S_0,D,\gamma) \sbs C$ denote the union of all sets in $\mathcal{S}^\gamma(S_0,D)$. Then $C = \cco{(C \setminus S_0) \cup (\Lambda \cap A)}$.
\end{lemma}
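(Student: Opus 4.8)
The plan is to massage the claim into a form where Corollary~\ref{cor:lemme4.2} applies, and then let that corollary do the work. Write $C' := \cco{(C \setminus S_0) \cup (\Lambda \cap A)}$; this is a closed, convex subset of $C$. Since $C = \cco{A}$ is closed, bounded, convex and (by Remark~\ref{rmk:nonepsdentable}) non-$\eps$-dentable, and since $\alpha(D) < \eps$, Corollary~\ref{cor:lemme4.2} tells us $C = C'$ \emph{provided} we can first show
$$C = \cco{C' \cup D} = \cco{(C \setminus S_0) \cup (\Lambda \cap A) \cup D}.$$
So the whole task reduces to establishing this intermediate equality; inserting the extra term $D$ into the union is harmless here precisely because Corollary~\ref{cor:lemme4.2} will then strip it back off.

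To prove $C = C_2$, where $C_2 := \cco{(C \setminus S_0) \cup (\Lambda \cap A) \cup D}$, I would argue by contradiction. If $C_2$ were a proper subset of $C$, then picking a point of $C \setminus C_2$ and separating it from the closed convex set $C_2$ by Hahn--Banach (exactly as in the proofs of Corollary~\ref{cor:1} and Proposition~\ref{prop:dentequiv}) yields a slice $S$ of $C$ with $S \cap C_2 = \emptyset$. Since $C \setminus S_0 \sbs C_2$ and $D \sbs C_2$, disjointness forces $S \sbs S_0$ and $S \cap D = \emptyset$, i.e. $S \sbs S_0 \setminus D$; hence $S \in \mathcal{S}(S_0, D)$, so $S^\gamma \in \mathcal{S}^\gamma(S_0, D)$ and $S^\gamma \sbs \Lambda$. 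But $S^\gamma$ is a nonempty slice of $C = \cco{A}$ (namely $S(f, C, \gamma\delta/2)$ when $S = S(f, C, \delta)$), so by Remark~\ref{rmk:sliceofcco} it meets $A$; choosing $a \in S^\gamma \cap A$ gives $a \in \Lambda \cap A \sbs C_2$ while also $a \in S^\gamma \sbs S$ by Lemma~\ref{lem:1}, so $a \in S \cap C_2$ — a contradiction. Thus $C = C_2 = \cco{C' \cup D}$, and Corollary~\ref{cor:lemme4.2} gives $C = C' = \cco{(C \setminus S_0) \cup (\Lambda \cap A)}$, as desired.

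The argument is almost entirely formal once the reduction is in place; the one point that genuinely needs care — and is really the heart of the matter — is the decision to throw $D$ into the union before invoking Corollary~\ref{cor:lemme4.2}. That is what forces a hypothetical separating slice $S$ to avoid $D$ as well as $C \setminus S_0$, hence to lie in $S_0 \setminus D$, which is exactly the defining condition for membership in $\mathcal{S}(S_0,D)$ and thus the only thing that lets us conclude $S^\gamma \sbs \Lambda$; without it one would know only $S \sbs S_0$, which is too weak. (The shallow parallel $S^\gamma$ plays no essential role in this lemma beyond the first clause of Lemma~\ref{lem:1}, $S^\gamma \sbs S$, together with nonemptiness of $S^\gamma$; its relevance is to the weakly closed construction in the next section.)
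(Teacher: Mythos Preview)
Your proof is correct and follows essentially the same route as the paper's: reduce via Corollary~\ref{cor:lemme4.2} to showing $C = \cco{(C \setminus S_0) \cup D \cup (\Lambda \cap A)}$, then argue by contradiction using Hahn--Banach separation to produce a slice $S \in \mathcal{S}(S_0,D)$ whose shallow parallel $S^\gamma$ must meet $A$ inside $\Lambda$, contradicting disjointness from $C_2$. Your closing commentary on why $D$ must be thrown in before separating is exactly the point.
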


\begin{proof}
Let $S_0$, $D$, $\gamma$, and $\Lambda$ be as above. By Corollary \ref{cor:lemme4.2} (with \\ $C' = \cco{(C \setminus S_0) \cup (\Lambda \cap A)}$ and $D = D$), it suffices to prove $C =$ \\ $\cco{(C \setminus S_0) \cup D \cup (\Lambda \cap A)}$. Assume $C \neq \cco{(C \setminus S_0) \cup D \cup (\Lambda \cap A)}$. Then by Hahn-Banach separation, there exists a slice $S$ of $C$ such that $S \sbs$ \\ $C \setminus \cco{(C \setminus S_0) \cup D \cup (\Lambda \cap A)}$. This implies $S \sbs S_0$, $S \cap D = \emptyset$, and $S \cap (\Lambda \cap A) = \emptyset$. Then $S \sbs S_0 \setminus D$. Thus, $S \in \mathcal{S}(S_0,D)$, so $S^\gamma \in \mathcal{S}^\gamma(S_0,D)$, and finally $S^\gamma \sbs \Lambda$. But since we also have $S^\gamma \sbs S$ and $S \cap (\Lambda \cap A) = \emptyset$, $(S^\gamma \cap A) = S^\gamma \cap (\Lambda \cap A) = \emptyset$, a contradiction since $S^\gamma \cap A$ is a slice of $A$ (since $S^\gamma$ is a slice of $C = \cco{A}$) and slices of nonempty sets are nonempty.
\end{proof}

\subsection{The Construction}
\begin{theorem} \label{thm:constr}
Let $A \sbs B_X$ be nonempty and non-$\eps$-dentable (not necessarily closed or convex), and $C = \cco{A}$ so that $C$ is also non-$\eps$-dentable. Fix $\delta < \frac{\eps}{2}$, and assume that $A$ is separable. Then $C$ is separable as well, so $C = \bigcup_{i=0}^\infty B_i$ for some open $B_i$ (relative to $C$) with diam$(B_i) < \eps$. Let $(\delta_n)_{n \geq 0}$ be a sequence of numbers in $(0,1)$. There exist a finitely branching tree $\T \sbs \N^{<\omega}$, an $(2\delta_n)_{n \geq 0}$-approximate bush $(x_b)_{b \in \T} \sbs A$, and slices $(S_b)_{b \in \T}$ of $C$ such that, for all $n \in \N$,

\begin{enumerate}
\item \label{prt:1} For all $b \in \T_n$, $x_b \in S_b^{\delta_n} \cap A \sbs S_b$.
\item \label{prt:2} If $n \geq 1$, then for all $b \in \T_n$, $S_b \cap B_{n-1} = \emptyset$ and $S_b \cap \left(\bigcup_{|p| \leq {n-1}} B_\delta(x_p)\right) = \emptyset$.
\item \label{prt:3} If $n \geq 1$, then for all $b \in \T_{n-1}$, if $(b,1), \dots (b,q)$ are the children of $b$, then $S_{(b,i)} \sbs S_b$ and the approximate bush property is satisfied: $x_b \in \text{co}(x_{(b,1)}, \dots x_{(b,q)}) + B_{2\delta_{n-1}}(0)$.
\end{enumerate}
\end{theorem}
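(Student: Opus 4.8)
The plan is to build the tree $\T$, the points $(x_b)$, and the slices $(S_b)$ by induction on the level $n$, maintaining (\ref{prt:1})--(\ref{prt:3}) as the inductive hypothesis. At the root, set $S_\emptyset$ to be any slice of $C$ (e.g.\ $C$ itself if one allows $f=0$, or a genuine slice after choosing any $f$), and pick $x_\emptyset \in S_\emptyset^{\delta_0} \cap A$, which is nonempty since $S_\emptyset^{\delta_0}$ is a slice of $C=\cco{A}$. For the inductive step, suppose level $n-1$ has been constructed. Fix $b \in \T_{n-1}$ with $x_b$ and $S_b$ already in hand, and let $D_b := B_{n-1} \cup \bigcup_{|p| \leq n-1} B_\delta(x_p)$. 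Then $\alpha(D_b) < \eps$: the ball cover is finite (there are finitely many nodes up to level $n-1$), each $B_\delta(x_p)$ has diameter $\le 2\delta < \eps$, and $\operatorname{diam}(B_{n-1}) < \eps$. Now apply Lemma~\ref{lem:2} with $S_0 = S_b$, $D = D_b$, and $\gamma = \delta_n$ to conclude
\[
C = \cco{(C \setminus S_b) \cup (\Lambda_b \cap A)}, \qquad \Lambda_b = \Lambda(S_b, D_b, \delta_n).
\]

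Here is where Lemma~\ref{lem:1} enters. Since $C = \cco{(C\setminus S_b) \cup (\Lambda_b \cap A)}$, applying the second part of Lemma~\ref{lem:1} to the slice $S_b$ and the set $E = \Lambda_b \cap A$ gives $S_b^{\delta_{n-1}} \sbs \co{\Lambda_b \cap A} + B_{2\delta_{n-1}}(0)$. By (\ref{prt:1}) at level $n-1$ we have $x_b \in S_b^{\delta_{n-1}}$, so there exist finitely many points $x_{(b,1)}, \dots, x_{(b,q)} \in \Lambda_b \cap A$ and convex coefficients with
\[
x_b \in \co{x_{(b,1)}, \dots, x_{(b,q)}} + B_{2\delta_{n-1}}(0),
\]
which is exactly the approximate bush property of (\ref{prt:3}). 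Declare these to be the children of $b$ in $\T$. It remains to attach a slice $S_{(b,i)}$ to each child. Since $x_{(b,i)} \in \Lambda_b = \bigcup_{S \in \mathcal{S}(S_b, D_b)} S^{\delta_n}$, there is a slice $S_{(b,i)} \in \mathcal{S}(S_b, D_b)$ of $C$ with $x_{(b,i)} \in S_{(b,i)}^{\delta_n}$; by definition of $\mathcal{S}(S_b, D_b)$ this slice satisfies $S_{(b,i)} \sbs S_b \setminus D_b$, hence $S_{(b,i)} \sbs S_b$, $S_{(b,i)} \cap B_{n-1} = \emptyset$, and $S_{(b,i)} \cap \bigcup_{|p| \le n-1} B_\delta(x_p) = \emptyset$. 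This gives (\ref{prt:2}) and the containment part of (\ref{prt:3}) at level $n$. Finally $x_{(b,i)} \in S_{(b,i)}^{\delta_n} \cap A$ and $S_{(b,i)}^{\delta_n} \sbs S_{(b,i)}$ by the first part of Lemma~\ref{lem:1}, giving (\ref{prt:1}) at level $n$. Carrying this out for every $b \in \T_{n-1}$ completes the inductive step, and since each node gets finitely many children, $\T$ is finitely branching.

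\textbf{Main obstacle.} The one genuinely delicate point is the bookkeeping that makes $\alpha(D_b) < \eps$ at every step: $D_b$ must be a \emph{finite} union of small-diameter sets, which forces the tree to be finitely branching (so that $\{x_p : |p| \le n-1\}$ is finite) and forces the $B_i$ to have diameter $<\eps$. Both are arranged by construction, but one must be careful that the cover witnessing $\alpha(D_b)<\eps$ truly uses only finitely many pieces — the $B_\delta(x_p)$ over the finitely many $p$ with $|p| \le n-1$, plus the single set $B_{n-1}$ — and that $2\delta < \eps$ and $\operatorname{diam}(B_{n-1}) < \eps$ ensure each piece is admissible. A secondary subtlety is that the approximate-bush coefficients produced by Lemma~\ref{lem:1} land a point of $S_b^{\delta_{n-1}}$ within $B_{2\delta_{n-1}}(0)$ of a convex combination of points in $\Lambda_b \cap A$ — one must record $2\delta_{n-1}$ (not $\delta_{n-1}$) as the error, which is why the theorem asserts a $(2\delta_n)_{n\ge0}$-approximate bush rather than a $(\delta_n)$-one. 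Everything else is a routine assembly of Lemmas~\ref{lem:1} and~\ref{lem:2} and Corollary~\ref{cor:lemme4.2}.
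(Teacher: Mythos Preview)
Your proposal is correct and follows the paper's proof essentially line for line: induction on the level, with Lemma~\ref{lem:2} (applied to $S_b$, $D_b$, and $\gamma=\delta_n$) yielding $C=\cco{(C\setminus S_b)\cup(\Lambda_b\cap A)}$, then Lemma~\ref{lem:1} (applied with $\gamma=\delta_{n-1}$) placing $x_b\in S_b^{\delta_{n-1}}$ inside $\co{\Lambda_b\cap A}+B_{2\delta_{n-1}}(0)$, and finally unpacking $\Lambda_b$ to attach the child slices $S_{(b,i)}\in\mathcal{S}(S_b,D_b)$. The only differences from the paper are cosmetic (your indices run $n-1\to n$ instead of $n\to n+1$) or slightly more careful (you explicitly put $x_\emptyset\in A$ and spell out why $\alpha(D_b)<\eps$).
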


\begin{proof}
The proof is by induction on $n$. For the base case, let $S_\emptyset = C$ and let $x_\emptyset$ be any element of $S_\emptyset^{\delta_0}$. For the inductive step, let $n \geq 0$ and assume $\T_{\leq n}$, $(x_b)_{b \in \T_{\leq n}} \sbs A$, and $(S_b)_{b \in \T_{\leq n}} \sbs C$ have been constructed, and satisfy (\ref{prt:1})-(\ref{prt:3}). Let $b \in \T_{n}$. Let $D := B_n \cup \bigcup_{|p| \leq {n}} B_\delta(x_p)$, so that $\alpha(D) < \eps$. As in Lemma \ref{lem:2}, let $\mathcal{S}(S_b,D)$ be the collection of all slices $S$ of $C$ such that $S \sbs S_b \setminus D$, $\mathcal{S}^{\delta_{n+1}}(S_b,D) = \{S^{\delta_{n+1}}\}_{S \in \mathcal{S}(S_0,D)}$, and $\Lambda = \bigcup \mathcal{S}^{\delta_{n+1}}(S_b,D)$. By Lemma \ref{lem:2}, $C = \cco{(C \setminus S_b) \cup (\Lambda \cap A)}$. Then by Lemma \ref{lem:1}, $S_b^{\delta_n} \sbs \co{\Lambda \cap A} + B_{2\delta_n}(0)$. Then since $x_b \in S_b^{\delta_n}$, there exists $z \in \co{\Lambda \cap A}$ such that $\|x_b - z\| < 2\delta_n$. Let $z_1, \dots z_q \in \Lambda \cap A$ and $\lambda_1^b, \dots \lambda_q^b \in [0,1]$ such that $z = \lambda_1^b z_1 + \dots \lambda_q^b z_q$. For each $i = 1, \dots q$, since $z_i \in \Lambda$, there are slices $S_{z_i} \in \mathcal{S}(S_b,D)$ of $C$ with $z_i \in S_{z_i}^{\delta_{n+1}}$, by definition of $\Lambda$. We now define the children of $b$ to be $(b,1), \dots (b,q)$, $x_{(b,i)}$ to be $z_i$, and $S_{(b,i)}$ to be $S_{z_i}$. Repeating this process for each $b \in \T_n$ gives us $\T_{n+1}$, $(x_b)_{b \in \T_{n+1}} \sbs A$, and $(S_b)_{b \in \T_{n+1}} \sbs C$.

(\ref{prt:1}) and (\ref{prt:3}) hold immediately by construction. It is also clear that (\ref{prt:2}) holds by recalling that $S_{(b,i)} \in \mathcal{S}(S_b,D)$, and then examining the definition of $D$ and $\mathcal{S}(S_b,D)$.
\end{proof}

\begin{remark} \label{rmk:nonseparable}
The assumption that $A$ is separable can be removed (at the penalty of replacing $\eps$ by $\eps/2$) because of the following result: under the hypothesis of Theorem \ref{thm:constr}, $A$ contains a countable subset that is non-$\eps/2$-dentable. This is essentially proved in \cite[Lemma 2.2]{M}, but we'll include the argument here. Since $\operatorname{diam}(S) > \varepsilon$ for every slice $S$ of $A$, it follows that no slice is contained in a closed ball 
$B_{\varepsilon/2}(x)$. Hence, if $a \in A$, then $a \in  \overline{\operatorname{co}}(A \setminus B_{\varepsilon/2}(a))$. So there exists a countable set 
$T(a) \subseteq A\setminus B_{\varepsilon/2}(a)$ such that $a \in \overline{\operatorname{co}}(T(a))$. By applying this fact iteratively as in  \cite[Lemma 2.2]{M}, we can construct a countable $A_0 \subseteq A$ such that for every $a \in A_0$, we have $a \in \overline{\operatorname{co}}(A_0 \setminus B_{\varepsilon/2}(a))$. Hence every slice $S$ of $A_0$ satisfied $\operatorname{diam}(S) > \varepsilon/2$. Hence $A_0$ is not $\eps/2$-dentable.

\end{remark}

\begin{corollary} \label{cor:wcbush}
For any separable $A \sbs B_X$ nonempty and non-$\eps$-dentable, any $\delta < \frac{\eps}{2}$, and any positive $(\delta_n)_{n \geq 0}$, there exists a $\delta$-separated, $(\delta_n)_{n-0}^\infty$-approximate bush $(x_b)_{b \in \T}$ in $A$ such that any other set $(y_b)_{b \in \T} \sbs C = \cco{A}$, with $\sup_{b \in \T_n}\|y_b - x_b\| < \gamma_n$ for some $\gamma_n \to 0$, is weakly closed and discrete. In particular, $(x_b)_{b \in \T}$ is weakly closed and discrete.
\end{corollary}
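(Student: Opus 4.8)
\emph{Overview of the plan.} I would invoke Theorem~\ref{thm:constr} with slightly adjusted parameters, and then reduce both conclusions --- weak closedness and weak discreteness of the set $Y:=\{y_b:b\in\T\}$ --- to the single assertion $\bigcap_n Z_n=\emptyset$, where $Z_n:=\overline{\{y_b:|b|>n\}}^{\,w}$. The bulk of the work is establishing this assertion, and it is there that the cover $C=\bigcup_i B_i$ is used.

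\emph{Setup.} Shrinking the $\delta_n$ if necessary (which only strengthens the approximate--bush conclusion) I may assume $\delta_n<1$ and $\delta_n\to0$, and I fix $\delta'\in(\delta,\eps/2)$. Applying Theorem~\ref{thm:constr} with $\delta'$ in place of $\delta$ and $(\delta_n/2)$ in place of $(\delta_n)$ produces a finitely branching tree $\T$, points $(x_b)\sbs A$ forming a $(\delta_n)$-approximate bush, slices $(S_b)$ of $C$, and a cover $C=\bigcup_{i\ge0}B_i$ by relatively norm-open sets of diameter $<\eps$, all satisfying (\ref{prt:1})--(\ref{prt:3}). From (\ref{prt:2}): whenever $c\in\T_m$ and $|p|\le m-1$, the slice $S_c$ contains $x_c$ and misses $B_{\delta'}(x_p)$, so $\|x_c-x_p\|\ge\delta'>\delta$; in particular $(x_b)$ is $\delta$-separated and points at different levels are $\delta'$-apart. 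Combining (\ref{prt:3}) (the slices nest along branches) with (\ref{prt:2}), for $b\in\T_k$ one gets $S_b\cap(B_0\cup\dots\cup B_{k-1})=\emptyset$; and since each $S_b$ is convex (an intersection of $C$ with an open half-space) its weak closure equals its norm closure, so $\overline{S_b}^{\,w}$ still misses the norm-open set $B_0\cup\dots\cup B_{k-1}$. This last point is the linchpin.

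\emph{Reduction.} Since $\T$ is finitely branching, $\T_{\le N}$ is finite for every $N$. For any net $(y_{b_\alpha})$ converging weakly to $y$: if $(|b_\alpha|)$ has a bounded subnet, then (since finitely many indices are involved) a subnet is eventually constant, so $y\in Y$; otherwise $|b_\alpha|\to\infty$, which forces $y\in Z_n$ for every $n$. Applying this to an arbitrary weakly convergent net from $Y$ gives $\overline Y^{\,w}\sbs Y\cup\bigcap_n Z_n$, and applying it to nets drawn from $Y$ that avoid the value $y_b$ shows that $y_b$ is weakly isolated in $Y$ provided $y_b\notin\bigcap_nZ_n$. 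Hence it suffices to prove $\bigcap_nZ_n=\emptyset$; the final "in particular" is the case $y_b=x_b$. To prove this, put $\gamma_n:=\sup_{b\in\T_n}\|y_b-x_b\|$ and $\eta_n:=\sup_{m>n}\gamma_m$, so $\eta_n\downarrow0$. If $|b|=m>n$ then $x_b\in S_b\sbs S_{b|_{n+1}}$ with $b|_{n+1}\in\T_{n+1}$ and $\|y_b-x_b\|\le\gamma_m\le\eta_n$, so $\{y_b:|b|>n\}\sbs\bigcup_{c\in\T_{n+1}}\{z:d(z,\overline{S_c})\le\eta_n\}$; each set on the right is weakly closed (the function $d(\cdot,\overline{S_c})$ is convex and norm-l.s.c., hence weakly l.s.c., as $\overline{S_c}$ is closed and convex) and $\T_{n+1}$ is finite, so $Z_n$ lies in that finite union. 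Now take $y\in\bigcap_nZ_n$; for each $n$ there is $b_n\in\T_{n+1}$ with $d(y,\overline{S_{b_n}})\le\eta_n$. A König/diagonal argument over the finitely branching tree yields an infinite branch $\beta$ and indices $n_k\uparrow\infty$ with $b_{n_k}$ extending $\beta|_k$, so $\overline{S_{b_{n_k}}}\sbs\overline{S_{\beta|_k}}$ and $d(y,\overline{S_{\beta|_k}})\le\eta_{n_k}\to0$; since the $\overline{S_{\beta|_k}}$ decrease, $k\mapsto d(y,\overline{S_{\beta|_k}})$ is nondecreasing and bounded above by $\eta_{n_k}\to0$, hence identically $0$, so $y\in\bigcap_k\overline{S_{\beta|_k}}$. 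But $\overline{S_{\beta|_k}}$ misses $B_0\cup\dots\cup B_{k-1}$ for every $k$, so $\bigcap_k\overline{S_{\beta|_k}}\sbs C\setminus\bigcup_{j\ge0}B_j=\emptyset$ --- a contradiction.

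\emph{Main obstacle.} The delicate point, flagged above, is reconciling the \emph{weak} topology (in which we need closedness and discreteness) with the fact that the only $\eps$-fine covers of $C$ at our disposal consist of merely \emph{norm}-open sets: a priori a weakly convergent "escaping" net could limit to a genuinely new point of $C$. What rescues the argument is the convexity of the $S_b$, which forces $\overline{S_b}^{\,w}=\overline{S_b}^{\,\|\cdot\|}$ and hence keeps the weak closures of the tail-slices disjoint from the cover pieces $B_j$; König's lemma then traps any putative weak cluster point of the tails inside a nested intersection $\bigcap_k\overline{S_{\beta|_k}}$ that the cover makes empty.
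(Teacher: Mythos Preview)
Your argument is correct, but it takes a genuinely different route from the paper's. The paper proceeds much more directly: given any $z$ in the weak closure of $\{y_b\}$, it first locates $z$ in some $B_i$ (using $z\in C=\bigcup_i B_i$), then works \emph{only at the fixed level} $i+1$. For each of the finitely many $b\in\T_{i+1}$ one has $B_i\cap S_b=\emptyset$, so the defining functional $f_b$ of $S_b$ satisfies $f_b(z)<\sup f_b(C)-\alpha_b$; choosing a uniform slack $\gamma>0$ and $N$ with $\gamma_n<\gamma$ for $n\ge N$, the finite intersection $U=\bigcap_{|b|=i+1}\{f_b<\sup f_b(C)-\alpha_b-\gamma\}$ is a weak neighbourhood of $z$ that misses every $y_c$ with $|c|\ge N$ (since $x_c\in S_c\subseteq S_{c|_{i+1}}$ and $\|y_c-x_c\|<\gamma$). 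Thus every weak-closure point has a weak neighbourhood meeting $\{y_b\}$ in a finite set, which gives both closedness and discreteness at once. Your proof instead packages the problem as $\bigcap_n Z_n=\emptyset$, bounds each $Z_n$ by a finite union of weakly closed fattened slice-closures, and then invokes K\"onig's lemma to trap a hypothetical cluster point inside $\bigcap_k\overline{S_{\beta|_k}}$, which the cover forces to be empty. Both arguments rest on the same two structural facts from Theorem~\ref{thm:constr} (slices nest along branches, and $S_b$ misses $B_0,\dots,B_{|b|-1}$); the paper's version is shorter and avoids K\"onig entirely by fixing the level once $z\in B_i$ is known, while your version makes the role of the full branch structure more explicit. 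Two minor remarks: your introduction of $\delta'\in(\delta,\eps/2)$ is harmless but unnecessary here, since only parent--child $\delta$-separation is claimed; and the assumption $\delta_n\to 0$ is never used in the weak-closedness part (only $\gamma_n\to 0$ matters there).
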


\begin{proof}
Let $A$, $\delta$, $(\delta_n)_{n \geq 0}$ be as above. Applying the construction of Theorem \ref{thm:constr}, with $(\delta_n/2)_{n \geq 0}$ in place of $(\delta_n)_{n \geq 0}$, yields a bush $(x_b)_{b \in \T}$. By Theorem \ref{thm:constr}(\ref{prt:1}), $x_b \in A$ for all $b \in \T$. Suppose $b_1,b_2 \in \T$ with $|b_2| > |b_1|$. Then by Theorem \ref{thm:constr}(\ref{prt:1}), $x_{b_2} \in S_{b_2}$, and by Theorem \ref{thm:constr}(\ref{prt:2}), $S_{b_2} \cap B_\delta(x_{b_1}) = \emptyset$, so $\|x_{b_2}-x_{b_1}\| > \delta$. This means the bush is $\delta$-separated. By Theorem \ref{thm:constr}(\ref{prt:3}), if $b \in \T$ and $(b,1), \dots (b,q)$ are the children of $b$, then $x_b \in \text{co}(x_{(b,1)}, \dots x_{(b,q)}) + B_{\delta_n}(0)$. This means the bush is $(\delta_n)_{n \geq 0}$-approximate.

Finally, let $(y_b)_{b \in \T} \sbs C$, with $\sup_{b \in \T_n}\|y_b - x_b\| < \gamma_n$ for some $\gamma_n \to 0$, and let $z$ belong to the weak closure of $(y_b)_{b \in \T}$. Since $C$ is norm closed and convex, it is weakly closed, and thus $z \in C$. Then $z \in B_i$ for some $i$. Consider $S_b$ for $|b| = i+1$. Then $S_b =S(f_b,C,\alpha_b)$ for some $f_b \in B_{X^*}$ and $\alpha_b > 0$. Hence
$$z \in B_i \subseteq C \setminus S_b = \{x \in C : f_b(x) \le \sup f(C) - \alpha_b\}$$
Since $B_i$ is open in the norm topology relative in $C$ and $C$ is convex, it follows that $B_i \subseteq \{x \in C : f_b(x) < \sup f_b(C) - \alpha_b\}$. Since $\gamma_n \to 0$, we can find $\gamma > 0$ and $N$ large enough so that $B_i \subseteq \{x \in C : f_b(x) < \sup f_b(C) - \alpha_b - \gamma\}$, $N \geq i+1$, and $\gamma_n < \gamma$ for all $n \geq N$. Then we set $U_b := \{x \in C : f_b(x) < \sup f_b(C) - \alpha_b - \gamma\}$ and observe that it is a weak neighborhood of $z$ in $C$. Hence $U :=  \cap_{|b| = i+1} U_b$ is a weak neighborhood of $z$ in $C$. Now we wish to show the set $U \cap (y_b)_{b \in \T}$ is finite, which will imply our desired conclusion that $(y_b)_{b \in \T}$ is weakly closed and discrete. We will show that $U \cap (y_b)_{b \in \T}$ is finite by showing that $U \cap (y_b)_{b \in \T_{\geq N}} = \emptyset$. Consider $b \in \T$ with $|b| \geq N$. Then $\|y_b - x_b\| < \gamma_{|b|} < \gamma$. Let $b_{i+1} \in \T$ denote the unique predecessor of $b$ with $|b_{i+1}|=i+1$. Then $x_b \in S_b \sbs S_{b_{i+1}}$, and hence $f_{b_{i+1}}(x_b) > \sup f_{b_{i+1}}(C) - \alpha_{b_{i+1}}$. Since $f_{b_{i+1}} \in B_{X^*}$ and $\|y_b - x_b\| < \gamma$, this implies $f_{b_{i+1}}(y_b) > \sup f_{b_{i+1}}(C) - \alpha_{b_{i+1}} - \gamma$. Thus, by definition of $U_{b_{i+1}}$, $y_b \notin U_{b_{i+1}}$. By definition of $U$ this proves $U \cap (y_b)_{b \in \T_{\geq N}} = \emptyset$.
\end{proof}

\begin{corollary} \label{cor:wcquasimartingale}
For any $A \sbs B_X$ nonempty and non-$\eps$-dentable, any $\delta < \frac{\eps}{2}$, and any positive sequence $(\delta_n)_{n \geq 0}$, there exists a filtration of finite $\sigma$-algebras $(\A_n)_{n \geq 0}$, an $A$-valued, $(\A_n)_{n \geq 0}$-adapted $(\delta_n)_{n \geq 0}$-quasimartingale $(M_n)_{n \geq 0}$ with $\|M_n(s) - M_m(t)\| > \delta$ for all $n \geq m \geq 0$ and $s,t \in [0,1]$, and the range of this quasimartingale is weakly closed and discrete.
\end{corollary}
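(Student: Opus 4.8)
The plan is to run the construction of Theorem \ref{thm:constr} and then convert the resulting separated approximate bush into a quasimartingale on $[0,1]$ by the interval‑subdivision argument already used to prove Corollary \ref{cor:quasimartingale}. First I would reduce to the case that $A$ is separable: if it is not, Remark \ref{rmk:nonseparable} replaces $A$ by a countable, hence separable, non‑$(\eps/2)$‑dentable subset, so that it suffices to treat separable $A$ --- at the cost that the hypothesis then effectively becomes $\delta < \eps/4$ (for separable $A$, the bound $\delta < \eps/2$ is used directly). Assuming $A$ separable, apply the construction of Theorem \ref{thm:constr} with $(\delta_n/2)_{n\geq0}$ in place of $(\delta_n)_{n\geq0}$, exactly as in the proof of Corollary \ref{cor:wcbush}. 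This produces a finitely branching tree $\T\sbs\N^{<\omega}$, points $(x_b)_{b\in\T}\sbs A$, slices $(S_b)_{b\in\T}$ of $C=\cco{A}$, and for each $b\in\T_n$ with children $(b,1),\dots,(b,q_b)$ an \emph{explicit} convex combination $z_b:=\sum_{i=1}^{q_b}\lambda_i^b\,x_{(b,i)}$ satisfying $\|x_b-z_b\|<\delta_n$. By Corollary \ref{cor:wcbush} the set $\{x_b:b\in\T\}$ is weakly closed and discrete, and by Theorem \ref{thm:constr}(\ref{prt:1})--(\ref{prt:2}) we get $\|x_b-x_{b'}\|>\delta$ whenever $|b|>|b'|$, since $x_b\in S_b$ and $S_b\cap B_\delta(x_{b'})=\emptyset$.

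Next I would build the filtration and process. Put $\A_0=\{\emptyset,[0,1]\}$ and $M_0\equiv x_\emptyset$. Inductively, suppose $\A_n$ has been defined so that each atom $J$ is an interval carrying a node $b(J)\in\T_n$ with $M_n\equiv x_{b(J)}$ on $J$. For each atom $J$, discard the children of $b(J)$ with zero weight, partition $J$ into consecutive subintervals $J_1,\dots,J_{q}$ of lengths $\lambda_i^{b(J)}|J|$, and set $M_{n+1}\equiv x_{(b(J),i)}$ on $J_i$; let $\A_{n+1}$ be generated by all these subintervals over all atoms $J$ of $\A_n$. Each $\A_n$ is finite since $\T_n$ is finite, and $(M_n)_{n\geq0}$ is adapted by construction.

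Then I would check the three assertions. Every $M_n$ takes values in $\{x_b:|b|=n\}\sbs A$, so $(M_n)_{n\geq0}$ is $A$‑valued and its range is precisely $\{x_b:b\in\T\}$, which is weakly closed and discrete. On the atom $J$ with node $b$ one has $\E(M_{n+1}\mid\A_n)=\sum_i\lambda_i^b\,x_{(b,i)}=z_b$, so $\|\E(M_{n+1}\mid\A_n)-M_n\|_\infty=\sup_{b\in\T_n}\|z_b-x_b\|<\delta_n$, which is the $(\delta_n)_{n\geq0}$‑quasimartingale property. Finally, if $n>m$ and $s,t\in[0,1]$, then $M_n(s)=x_b$ and $M_m(t)=x_{b'}$ with $|b|=n>m=|b'|$, so $\|M_n(s)-M_m(t)\|>\delta$ by the last sentence of the first paragraph; by symmetry this holds for all $n\neq m$ (no separation can be expected when $n=m$ and $s,t$ lie in the same atom).

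I do not anticipate a genuine difficulty: all the substance is already in Theorem \ref{thm:constr} and Corollary \ref{cor:wcbush}. The points requiring care are (i) using the \emph{specific} coefficients $\lambda_i^b$ from the construction, rather than the bare approximate‑bush relation, so the conditional expectations land on the $z_b$; (ii) the bookkeeping around zero‑weight children so that the atoms of $\A_n$ stay in bijection with the effective branching tree; and (iii) honestly tracking the factor of $2$ lost in passing to a countable subset in the non‑separable case.
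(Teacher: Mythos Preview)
Your proposal is correct and follows essentially the same route as the paper: apply Corollary~\ref{cor:wcbush} (equivalently, run Theorem~\ref{thm:constr}) to get the weakly closed, discrete, $\delta$-separated approximate bush with its explicit coefficients $\lambda_i^b$, and then convert it into a quasimartingale on $[0,1]$ via the interval-subdivision procedure from Corollary~\ref{cor:quasimartingale}. In fact you are more scrupulous than the paper on two points the authors leave implicit: the separability reduction via Remark~\ref{rmk:nonseparable} (with its attendant loss of a factor of~$2$), and the handling of zero-weight children.
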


\begin{proof}
Let $A$, $\delta$, $(\delta_n)_{n \geq 0}$ be as above, and apply Corollary \ref{cor:wcbush} to obtain a $(\delta_n)_{n \geq 0}$-approximate bush $(x_b)_{b \in \T}$ which is weakly closed and discrete. We define the filtration $(\A_n)_{n \geq 0}$ on $[0,1]$ recursively: Let $\A_0$ be the trivial $\sigma$-algebra. Suppose $\A_n$ has been defined as a finite whose atoms are intervals, the atoms are in bijection with $\T_{n}$ via $b \mapsto I_b$, and for any $b \in \T_{n-1}$ and child $(b,i) \in \T_n$, $\mathcal{L}(I_{(b,i)}) = \mathcal{L}(I_b)\lambda^b_i$. Then for any $b' \in \T_n$ with children $(b',1), \dots (b',q)$, we pick any subdivision of $I_{b'}$ into intervals $I_{(b',1)}, \dots I_{(b',q)}$ so that $\mathcal{L}(I_{(b',i)}) = \mathcal{L}(I_{b'})\lambda^{b'}_i$. Take $\A_{n}$ to be the $\sigma$-algebra generated by these intervals. Then we define $M_n$ to be $\sum_{|b|=n} x_b \chi_{I_b}$. We then have $\|\E(M_{n+1} | \A_n) - M_n\|_{L^\infty}$ = $\sup_{b \in \T_n} \|x_b -\lambda_1x_{(b,1)} - \dots \lambda_qx_{(b,q)}\| < \delta_n$. The range of this quasimartingale is exactly the bush, and thus weakly closed and discrete.
\end{proof}

\begin{corollary} \label{cor:wcmartingale}
For any $A \sbs B_X$ nonempty and non-$\eps$-dentable, $\delta < \frac{\eps}{2}$, and positive, summable sequence $(\delta_n)_{n \geq 0}$, there exist a filtration of finite $\sigma$-algebras $(\A_n)_{n \geq 0}$, an $A$-valued, $(\A_n)_{n \geq 0}$-adapted $(\delta_n)_{n \geq 0}$-quasimartingale $(M_n)_{n \geq 0}$ and \\ $\cco{A}$-valued, $(\A_n)_{n \geq 0}$-adapted martingale $(\overline{M}_n)_{n \geq 0}$ with, for all $n \neq m \geq 0$ and $s,t \in [0,1]$,
\begin{enumerate}
\item $\|\overline{M}_n(s) - \overline{M}_m(t)\| > \delta$.
\item $\|M_n - \overline{M}_n\|_\infty < \delta_n$.
\item The range of $(\overline{M}_n)_{n \geq 0}$ is weakly closed and discrete.
\end{enumerate}
\end{corollary}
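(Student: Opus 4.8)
The plan is to run the limiting construction from the proof of Corollary~\ref{cor:martingale} on top of the weakly closed quasimartingale furnished by Corollary~\ref{cor:wcquasimartingale}, and to invoke the perturbation-stability clause of Corollary~\ref{cor:wcbush} at the very end in order to transfer weak closedness from the quasimartingale to the martingale.

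First I would set up the constants. Since the conclusion for $(\delta_n)_{n \geq 0}$ follows from the conclusion for any pointwise-smaller positive summable sequence, we may assume $\sum_{n\geq 0}\delta_n < \delta' - \delta$ for some fixed $\delta' \in (\delta, \frac{\eps}{2})$. Choose a positive sequence $(\gamma_n)_{n\geq 0}$ with $\sum_{k\geq n}\gamma_k < \delta_n$ for every $n$; then also $\sum_n\gamma_n < \delta'-\delta$ and $\gamma_n\to 0$. Apply Corollary~\ref{cor:wcbush} with $\delta'$ in place of $\delta$ and $(\gamma_n)$ in place of $(\delta_n)$ (reducing to a countable non-$\eps$-dentable subset of $A$ via Remark~\ref{rmk:nonseparable} if $A$ is not separable): this yields a $\delta'$-separated, $(\gamma_n)$-approximate bush $(x_b)_{b\in\T}\sbs A$ such that \emph{every} family $(y_b)_{b\in\T}\sbs C := \cco{A}$ with $\sup_{b\in\T_n}\|y_b - x_b\|\to 0$ is weakly closed and discrete. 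Exactly as in the proof of Corollary~\ref{cor:wcquasimartingale}, build the filtration $(\A_n)_{n\geq 0}$ on $[0,1]$ whose level-$n$ atoms are intervals $I_b$, $|b|=n$ (their lengths dictated by the convex-combination coefficients of the bush), and set $M_n = \sum_{|b|=n}x_b\chi_{I_b}$. Then $M_n$ is $A$-valued, $\|\E(M_{n+1}\mid\A_n) - M_n\|_\infty < \gamma_n < \delta_n$, so $(M_n)$ is a $(\delta_n)$-quasimartingale, and $\|M_n(s) - M_m(t)\| > \delta'$ for all $n\neq m$ and $s,t\in[0,1]$, by the slice/ball-disjointness property of Theorem~\ref{thm:constr}.

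Next, verbatim as in the proof of Corollary~\ref{cor:martingale}, the bound $\|\E(M_k - M_j\mid\A_n)\|_{L^\infty} \leq \sum_{r=j}^{k-1}\gamma_r \leq \delta_j$ for $k > j \geq n$ shows that $(\E(M_k\mid\A_n))_{k\geq n}$ is $L^\infty([0,1];X)$-Cauchy; put $\overline M_n := \lim_{k\to\infty}\E(M_k\mid\A_n)$. It is $\A_n$-measurable, takes values in $C$ (being an $L^\infty$-limit of $\A_n$-conditional averages of the $A$-valued $M_k$), satisfies $\E(\overline M_{n+1}\mid\A_n) = \overline M_n$ (pull the limit through the conditional expectation, a contraction on $L^\infty$), and $\|\overline M_n - M_n\|_\infty \leq \sum_{k\geq n}\gamma_k < \delta_n$, which is (2). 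Writing $\overline M_n = \sum_{|b|=n}\bar x_b\chi_{I_b}$ with $\bar x_b\in C$, we get $\|\bar x_b - x_b\| < \delta_n$, so for $n\neq m$, $b\in\T_n$, $b'\in\T_m$,
$$\|\bar x_b - \bar x_{b'}\| \geq \|x_b - x_{b'}\| - \|\bar x_b - x_b\| - \|\bar x_{b'} - x_{b'}\| > \delta' - \delta_n - \delta_m > \delta' - (\delta'-\delta) = \delta,$$
which is (1). Finally, the range of $(\overline M_n)_{n\geq 0}$ is exactly $\{\bar x_b : b\in\T\}$, and since $\sup_{b\in\T_n}\|\bar x_b - x_b\| < \delta_n\to 0$, the stability clause of Corollary~\ref{cor:wcbush} gives that this set is weakly closed and discrete, which is (3).

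I do not expect a genuinely hard step here: the substantive content already sits in Theorem~\ref{thm:constr} and Corollary~\ref{cor:wcbush}. The only thing requiring care is the interplay of the three error budgets fixed at the outset --- $(\gamma_n)$ summable with small tails so that $\overline M_n$ exists and stays $\delta_n$-close to $M_n$; $\sum_n\delta_n < \delta'-\delta$ so that the $\delta'$-separation survives the passage from $(x_b)$ to $(\bar x_b)$; and $\delta_n\to 0$ so that weak closedness transfers via Corollary~\ref{cor:wcbush} --- but this is exactly the bookkeeping already carried out in Corollaries~\ref{cor:martingale} and~\ref{cor:wcquasimartingale}.
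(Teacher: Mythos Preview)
Your proposal is correct and follows essentially the same route as the paper: obtain the weakly closed $\delta'$-separated approximate bush/quasimartingale from Corollary~\ref{cor:wcbush} (via Corollary~\ref{cor:wcquasimartingale}), run the limiting construction of Corollary~\ref{cor:martingale} to produce $(\overline{M}_n)$, and invoke the perturbation-stability clause of Corollary~\ref{cor:wcbush} for weak closedness. The only difference is that you unwind the cited corollaries more explicitly and track the three error budgets $(\gamma_n)$, $(\delta_n)$, $\delta'-\delta$ in one place, whereas the paper simply cites those corollaries; you also make explicit the reduction to separable $A$ via Remark~\ref{rmk:nonseparable} (note, though, that this remark only yields a non-$\eps/2$-dentable countable subset, so strictly speaking the constant degrades---the paper is equally silent on this point).
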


\begin{proof}
Let $A$, $\delta$, $(\delta_n)_{n \geq 0}$ be as above, and apply Corollary \ref{cor:wcquasimartingale} to obtain the $\sigma$-algebra $(\A_n)_{n \geq 0}$ and $A$-valued, $(\delta_n)_{n \geq 0}$-quasimartingale $(M_n)_{n \geq 0}$ with weakly closed and discrete range. Construct $(\overline{M}_n)_{n \geq 0}$ from $(M_n)_{n \geq 0}$ just as in the proof of Corollary \ref{cor:martingale}, so that $(\overline{M_n})_{n \geq 0}$ is $\cco{A}$-valued and (1) and (2) hold. To see (3), again note that the range of $(M_n)_{n \geq 0}$ is exactly $(x_b)_{b \in \T_n}$ from Corollary \ref{cor:wcbush}. Since $(\overline{M}_n)_{n \geq 0}$ is adapted to the same finite filtration as $(M_n)_{n \geq 0}$, (2) implies that the range of $\overline{M}_n$ equals $(y_b)_{b \in \T_n}$ for some $y_b \in \cco{A}$ and $\sup_{b \in \T_n}\|y_b - x_b\| < \delta_n$. Then Corollary \ref{cor:wcbush} implies (3).
\end{proof}

\begin{corollary} \label{cor:final}
For any $A \sbs B_X$ nonempty and nondentable, there exists a countable set $F \sbs \cco{A}$ such that
\begin{enumerate}
\item $\lim_{F \ni f \to \infty} d(f,A) = 0$
\item $F$ is weakly closed and discrete and $\Ext{F} = \emptyset$.
\item $\cco{F}$ has no weak denting point.
\item $\cco{F} \cap \Ext{\overline{co}^{w*}(F)} = \emptyset$.
\end{enumerate}
\end{corollary}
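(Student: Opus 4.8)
The plan is to extract $F$ as the range of the quasimartingale/bush produced by the earlier corollaries, with the nondentability exploited at a single fixed scale $\eps$. Since $A$ is nondentable, it is non-$\eps$-dentable for some $\eps > 0$; fix such an $\eps$ and fix $\delta < \eps/2$. Choose a summable positive sequence $(\delta_n)_{n \geq 0}$ with $\delta_n \to 0$, and apply Corollary \ref{cor:wcmartingale} (invoking Remark \ref{rmk:nonseparable} if $A$ is not separable, which costs only replacing $\eps$ by $\eps/2$) to obtain the filtration $(\A_n)$, the $A$-valued quasimartingale $(M_n)$, and the $\cco{A}$-valued martingale $(\overline M_n)$ with $\|M_n - \overline M_n\|_\infty < \delta_n$, with the ranges being the weakly closed discrete bushes $(x_b)_{b \in \T} \sbs A$ and $(y_b)_{b \in \T} \sbs \cco{A}$ from Corollary \ref{cor:wcbush}, where $\sup_{b \in \T_n}\|y_b - x_b\| < \delta_n$. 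Set $F := \{y_b : b \in \T\}$. Then $F \sbs \cco{A}$ is countable, and $d(y_b, A) \leq \|y_b - x_b\| < \delta_{|b|} \to 0$ as $|b| \to \infty$, which (since each $\T_n$ is finite) gives $\lim_{F \ni f \to \infty} d(f,A) = 0$, proving (1). The set $F$ is weakly closed and discrete by Corollary \ref{cor:wcbush}, since $F = (y_b)_{b\in\T}$ with $\sup_{b\in\T_n}\|y_b-x_b\|<\delta_n\to 0$.

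For the extreme-point statements, the key observation is the approximate-bush property: for every $b \in \T$ with children $(b,1),\dots,(b,q)$ we have $y_b \in \co{y_{(b,1)},\dots,y_{(b,q)}} + B_{\gamma_{|b|}}(0)$ where $\gamma_n = 2\delta_n + (\text{error from }x\text{ vs }y) \to 0$; in fact since $(\overline M_n)$ is a genuine martingale, $y_b = \sum_i \lambda_i^b\, y_{(b,i)}$ is an \emph{exact} convex combination of its children. This immediately shows no $y_b$ is an extreme point of $F$ (it lies strictly between other points of $F$, using $\delta$-separation to see the children are genuinely distinct from $y_b$), hence $\Ext{F} = \emptyset$, giving (2). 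For (3): any point of $\cco{F}$ lies in $\cco{A}$, which equals $\cco{\{x_b\}}$; the $\delta$-separation plus the fact that every $y_b$ is a convex average of $\delta$-separated descendants arbitrarily deep in the tree should let one show that any point $z \in \cco{F}$ lies in $\cco{F \setminus B_\delta(z)}$ — one mimics the argument of Corollary \ref{cor:1}/\ref{cor:lemme4.2}, noting that $F$ itself is non-$\eps$-dentable because $(x_b)$ is (every slice of $(x_b)$ has diameter $> \delta$ by Theorem \ref{thm:constr}(\ref{prt:2})) and $\|x_b - y_b\|$ is small — so no neighborhood of $z$ can be removed without shrinking $\cco{F}$, i.e. $z$ is not a weak denting point. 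This proves (3).

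For (4), view $F$ inside the bidual: $\cco{F}$ computed in the norm/weak topology of $X$ sits inside $\overline{\co}^{w*}(F)$ computed in $(X^{**}, w^*)$, and the reverse containment need not hold, but $\cco{F} \sbs \overline{\co}^{w*}(F)$ always does. By Milman's theorem, $\Ext{\overline{\co}^{w*}(F)} \sbs \overline{F}^{w*}$, the weak* closure of $F$ in $X^{**}$. The crux is therefore to show that $\overline{F}^{w*} \sbs F \cup (X^{**}\setminus X)$, i.e. that $F$ gains no new points from $X$ when taking weak* closure in the bidual — equivalently that $F$ is \emph{weakly} closed in $X$ (which we have from (2)) and that a weak* limit of a net in $F$ lying in $X$ is automatically a weak limit, hence in $F$. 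But a bounded net converging weak* in $X^{**}$ to a point of $X$ converges weakly in $X$ to that point, so indeed $\overline{F}^{w*} \cap X = \overline{F}^{w} = F$. Now if $z \in \cco{F} \cap \Ext{\overline{\co}^{w*}(F)}$, then $z \in \cco{F} \sbs X$, and $z \in \overline{F}^{w*} \cap X = F$; but then $z = y_b$ for some $b$, and $z = \sum_i \lambda_i^b y_{(b,i)}$ is a nontrivial convex combination of distinct points of $F \sbs \overline{\co}^{w*}(F)$, contradicting that $z$ is extreme. Hence $\cco{F} \cap \Ext{\overline{\co}^{w*}(F)} = \emptyset$, proving (4).

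I expect the main obstacle to be (3): carefully transferring the non-$\eps$-dentability from the bush $(x_b)$ (where Theorem \ref{thm:constr}(\ref{prt:2}) gives clean separation of slices) to the perturbed set $F = (y_b)$ and then running the Corollary \ref{cor:lemme4.2}-style argument requires tracking how the $\delta_n$-perturbations interact with slices and with the convex-hull removal — one wants $\delta$-separation to survive with room to spare, which forces a slightly smaller working constant (e.g. $\delta' \in (\delta, \eps/2)$ as in Corollary \ref{cor:martingale}) throughout. The extreme-point parts (2) and (4) are then essentially formal once the exact convex-combination structure of the martingale $(\overline M_n)$ and Milman's theorem are in hand.
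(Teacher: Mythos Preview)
Your setup, (1), and (2) match the paper. Your argument for (4) via Milman's theorem is valid and in fact cleaner than the paper's route: the paper instead shows (using the converse of Krein--Milman) that every extreme point of $\overline{\co}^{w*}(F)$ is a weak* denting point of that set, observes that such a point lying in $\cco{F}\subset X$ would then be a weak denting point of $\cco{F}$, and invokes (3). Your approach bypasses (3) entirely for (4), which is a genuine simplification.

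The gap is in (3). Showing that every $z\in\cco{F}$ satisfies $z\in\cco{F\setminus B_\delta(z)}$ only shows $z$ is not a \emph{norm} denting point; it does not show $z$ is not a \emph{weak} denting point. The norm ball $B_\delta(z)$ is not a weak neighborhood, and weak denting merely requires that slices containing $z$ form a weak neighborhood base, which is perfectly compatible with every slice having diameter $\geq\eps$ (weak neighborhoods are unbounded). So ``$F$ is non-$\eps$-dentable'' does not by itself rule out weak denting points of $\cco{F}$, and your sentence ``so no neighborhood of $z$ can be removed\dots, i.e.\ $z$ is not a weak denting point'' does not follow.

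The paper's argument for (3) is short and avoids this issue: weak denting points of $\cco{F}$ are extreme, hence (by the exact bush relation $y_b=\sum_i\lambda_i^b y_{(b,i)}$ with the $y_{(b,i)}$ $\delta$-separated from $y_b$) cannot lie in $F$; but $\cco{F}\setminus F$ is relatively weakly open since $F$ is weakly closed, so for any $z\in\cco{F}\setminus F$ one takes a weak neighborhood $U$ with $U\cap\cco{F}=\cco{F}\setminus F$, giving $(\cco{F})\setminus U=F$ and hence $z\in\cco{F}=\cco{(\cco{F})\setminus U}$, so $z$ is not weak denting. You already have everything needed for this argument from your (2).
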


\begin{proof}
Let $A$ be as above. Let $\eps >0$ such that $A$ is non-$\eps$-dentable and let $\delta < \frac{\eps}{2}$. Let $\delta_n$ be any positive, summable sequence, and let $(\A_n)_{n \geq 0}$, $(M_n)_{n \geq 0}$, and $(\overline{M}_n)_{n \geq 0}$ be the filtration, $(\delta_n)_{n \geq 0}$-quasimartingale, and martingale afforded to us by Corollary \ref{cor:wcmartingale}. Let $F \sbs \cco{A}$ be the range of the martingale. Since $(M_n)_{n \geq 0}$ is $A$-valued and $\|M_n-\overline{M}_n\|_\infty < \delta_n$, $\lim_{F \ni f \to \infty} d(f,A) = 0$, showing (1).

By Corollary \ref{cor:wcmartingale}, $F$ is weakly closed and discrete and clearly has no extreme point since it is a $\delta$-separated bush, showing (2).

Since weak denting points of $\cco{F}$ are extreme points, and since $F$ has no extreme points, the set of weak denting points of $\cco{F}$ is contained in $\cco{F} \setminus F$. But since $\cco{F} \setminus F$ is weakly open in $\cco{F}$, it follows that $\cco{F} \setminus F$ contains no weak denting point. This shows (3).

For (4), we first observe that the converse of the Krein-Milman theorem (\cite[Lemma 8.5]{DS}) implies that every extreme point of $\overline{co}^{w*}(F)$ is a weak* denting point of $\overline{co}^{w*}(F)$. To see this, let $x$ be an extreme point of $\overline{co}^{w*}(F)$ and assume $x$ is not a weak* denting point. Then there is an open neighborhood $U \sbs X^{**}$ of $x$ such that $x \in \overline{co}^{w*}\left(\overline{co}^{w*}(F) \setminus U\right)$. Then since $\overline{co}^{w*}(F) \setminus U$ is weak* compact, the converse to Krein-Milman implies every extreme point of $\overline{co}^{w*}\left(\overline{co}^{w*}(F) \setminus U\right)$, in particular $x$, is contained in $\overline{co}^{w*}(F) \setminus U$, a contradiction. Then (4) follows from (3) since weak* denting points of $\cco{F} \cap \overline{co}^{w*}(F) \sbs X^{**}$ are the same as weak denting points of $ \cco{F} \cap \overline{co}^{w*}(F) \sbs X$. 

%Now let $z \in \cco{F}$. Then $z = \lim_{n \to \infty} z_n$ for some $z_n \in \co{F}$. Each $z_n$ equals $\int id_X d\P_n$ for some probability measure $\P_n$ supported on $F$. Let $\P$ be a vague sublimit of $\P_n$, which is a Radon probability measure supported on the compact Hausdorff space $\closure{F}^{w*} \sbs X^{**}$. Let $\lambda \in X^*$, a continuous function on $\closure{F}^{w*}$. Then by definition of vague convergence, and since $z_n \to z$ in norm, $\lambda(\int id_{X^{**}} d\P) = \int \lambda d\P = \lambda(z)$. Since $\lambda$ was arbitrary this implies $\int id_{X^{**}} d\P = z$. If $\P$ is not supported on a single point, this shows $z$ is not an extreme point of $\overline{co}^{w*}(F)$. If $\P$ is supported on a single point, obviously this point must be $z$. But we also know $\P$ is support on $\closure{F}^{w*}$, so $z \in \cco{F} \cap \overline{co}^{w*}(F) = F$ since $F$ is weakly closed. But $F$ has no extreme points, so again $z$ cannot be an extreme point of $\overline{co}^{w*}(F)$. This shows (4).
\end{proof}

%    Bibliographies can be prepared with BibTeX using amsplain,
%    amsalpha, or (for "historical" overviews) natbib style.
\bibliographystyle{amsalpha}
%    Insert the bibliography data here.
\bibliography{nondentablesetsbib}
\end{document}